\newtheorem{thm}{Theorem}[section]
\newtheorem{prop}[thm]{Proposition}
\newtheorem{lem}[thm]{Lemma}
\theoremstyle{definition}
\newtheorem{definition}[thm]{Definition}
\newtheorem{example}[thm]{Example}
\newtheorem{question}[thm]{Question}
\theoremstyle{remark}
\newtheorem{remark}[thm]{Remark}
\numberwithin{equation}{section}
\DeclareMathOperator*{\lcm}{lcm} 
\newcommand{\fa}{\mathfrak{a}}
\newcommand{\fv}{\mathfrak{v}}
\newcommand{\fm}{\mathfrak{m}}
\newcommand{\ep}{\epsilon}
\newcommand{\bZ}{\mathbb{Z}}
\newcommand{\bN}{\mathbb{N}}
\newcommand{\Perm}{\mathrm{Perm}}
\newcommand{\Hol}{\mathrm{Hol}}
\newcommand{\NHol}{\mathrm{NHol}}
\newcommand{\Aut}{\mathrm{Aut}}
\newcommand{\Norm}{\mathrm{Norm}}
\newcommand{\HH}{\mathcal{H}}
\newcommand{\Id}{\mathrm{Id}}
\newcommand{\mmod}{\hspace{-2.5mm}\pmod}
\begin{document}

\large 

\title[Multiple holomorph of groups of squarefree or odd prime power order]{On the multiple holomorph of groups \\of squarefree or odd prime power order}

\author{Cindy (Sin Yi) Tsang}
\address{School of Mathematics (Zhuhai), Sun Yat-Sen University}
\email{zengshy26@mail.sysu.edu.cn}\urladdr{http://sites.google.com/site/cindysinyitsang/} 

\date{\today}

\maketitle

\begin{abstract}Let $G$ be a group and write $\Perm(G)$ for its symmetric group. Define $\Hol(G)$ to be the holomorph of $G$, regarded as a subgroup of $\Perm(G)$, and let $\NHol(G)$ denote its normalizer. The quotient $T(G) = \NHol(G)/\Hol(G)$ has been computed for various families of groups $G$, and in most of the known cases, it turns out to be elementary $2$-abelian, except for two groups of order $16$ and some groups of odd prime power order and nilpotency class two. In this paper, we shall show that $T(G)$ is elementary $2$-abelian for all finite groups $G$ of squarefree order, and that $T(G)$ is not a $2$-group for certain finite $p$-groups $G$ of nilpotency class at most $p-1$. 
\end{abstract}

\tableofcontents

\section{Introduction}

Let $G$ be a group and write $\Perm(G)$ for its symmetric group. Recall that a subgroup $N$ of $\Perm(G)$ is said to be \emph{regular} if the map
\begin{equation}\label{xi} \xi_N : N\longrightarrow G;\hspace{1em} \xi_N(\eta) = \eta(1_G)\end{equation}
is bijective, or equivalently, if the $N$-action on $G$ is both transitive and free. For example, both $\lambda(G)$ and $\rho(G)$ are regular subgroups of $\Perm(G)$, where
\[\begin{cases}
\lambda:G\longrightarrow\Perm(G);\hspace{1em}\lambda(\sigma) = (x\mapsto \sigma x)\\
\rho:G\longrightarrow\Perm(G);\hspace{1em}\rho(\sigma) = (x\mapsto x\sigma^{-1})
\end{cases}\]
denote the left and right regular representations of $G$, respectively. Further, recall that the \emph{holomorph of $G$} is defined to be
\[\label{Hol(G)} \Hol(G) = \rho(G) \rtimes \Aut(G).\]
Alternatively, it is not hard to verify that
\[\Norm_{\Perm(G)}(\lambda(G)) = \Hol(G) = \Norm_{\Perm(G)}(\rho(G)).\]
Then, it seems natural to ask whether $\Perm(G)$ has other regular subgroups whose normalizer is also equal to $\Hol(G)$. This problem was first considered by G. A. Miller \cite{Miller}. More specifically, put
\[\HH_0(G) = \left\lbrace\begin{array}{c}\mbox{regular subgroups $N$ of $\Perm(G)$ isomorphic to $G$}\\\mbox{and such that $\Norm_{\Perm(G)}(N) = \Hol(G)$}\end{array}\right\rbrace.\]
In \cite{Miller}, he defined the \emph{multiple holomorph of $G$} to be
\begin{equation}\label{NHol def} \NHol(G) = \Norm_{\Perm(G)}(\Hol(G)),\end{equation}
which clearly acts on $\HH_0(G)$ via conjugation, and he showed that this action is transitive. Hence, the quotient group
\[ T(G) = \frac{\NHol(G)}{\Hol(G)}\]
acts regularly on and so has the same cardinality as $\HH_0(G)$. In fact, we have 
\begin{equation}\label{HH0'}\HH_0(G) = \{\pi\lambda(G)\pi^{-1} : \pi\in\NHol(G)\},\end{equation}
and in the case that $G$ is finite, we also have
\begin{equation}\label{HH0}\HH_0(G) = \{N\lhd\Hol(G): N\simeq G\mbox{ and $N$ is regular}\}.\end{equation}
These facts are easy to show, and a proof may be found in \cite[Section 2]{Tsang NHol} or \cite[Section 1]{Kohl NHol}, for example. Let us mention in passing that regular subgroups of the holomorph, not just the ones which are normal, have close connections with Hopf-Galois structures on field extensions and set-theoretic solutions to the Yang-Baxter equation; see \cite[Chapter 2]{Childs book} and \cite{Skew braces}.

\vspace{1.5mm}

The study of $T(G)$ did not attract much attention initially other than \cite{Miller} and \cite{Mills}. But recently, T. Kohl, who was originally interested in Hopf-Galois structures, revitalized this topic of research in \cite{Kohl NHol}. His work then motivated the calculation of $T(G)$ for other groups $G$. Note that for non-abelian $G$, we have $T(G)\neq1$ because the element $\pi_{-1}\Hol(G)$ is non-trivial, where $\pi_{-1}$ is the involution permutation $x\mapsto x^{-1}$ on $G$. It turns out that $T(G)$ is elementary $2$-abelian (including the trivial group) in all of the following five cases.
\begin{itemize}
\item finitely generated abelian groups $G$; see \cite{Miller,Mills,Caranti1}.
\item finite dihedral groups $G$ of order at least $6$; see \cite{Kohl NHol}.
\item finite dicyclic groups $G$ of order at least $12$; see \cite{Kohl NHol}.
\item finite perfect groups $G$ with trivial center; see \cite{Caranti2}.
\item finite quasisimple or almost simple groups $G$; see \cite{Tsang NHol}.
\end{itemize}
Nevertheless, there are examples of groups $G$ for which $T(G)$ is not elementary $2$-abelian. In \cite[Section 3]{Kohl NHol}, it was noted that $T(G)$ is non-abelian (but still a $2$-group) for two groups $G$ of order $16$. Also, in \cite[Proposition 3.1]{Caranti3}, it was shown that $T(G)$ is not even a $2$-group for a large subfamily of
\begin{itemize}
\item finite groups of odd prime power order and nilpotency class two.
\end{itemize}
In this paper, we shall consider two new families of finite groups $G$. First, in Section~\ref{section1}, we shall show that $T(G)$ is elementary $2$-abelian for
\begin{itemize}
\item finite groups $G$ of squarefree order.
\end{itemize}
This was motivated by \cite{Byott squarefree}, in which all cyclic regular subgroups of $\Hol(G)$ for $G$ of squarefree order were enumerated, and these correspond to Hopf-Galois structures on cyclic field extensions of squarefree degree. Then, in Section~\ref{section2}, we shall show that $T(G)$ is not a $2$-group for a subfamily of
\begin{itemize}
\item finite $p$-groups $G$ of nilpotency class at most $p-1$.
\end{itemize}
We shall state the precise statements of our results in Theorems~\ref{main thm} and~\ref{main thm'} below. We chose not to include them here because more notation would have to be introduced. 

\vspace{1.5mm}

In view of the known results so far, it seems natural to ask whether there is a finite non-nilpotent group $G$ for which $T(G)$ is not elementary $2$-abelian or not even a $2$-group. By calculating the size of the set in (\ref{HH0}), which equals the order of $T(G)$, we have checked  in \textsc{Magma} \cite{magma} that such a group $G$ does exist. However, among the non-nilpotent groups $G$ of order at most $95$, the quotient $T(G)$ is not a $2$-group only for $G = \mbox{{\sc SmallGroup}}(n,i)$ \cite{SGlibrary}, where
\begin{equation}\label{(n,i)}(n,i)\in\{(48,12),(48,14),(63,1),(80,12),(80,14)\},\end{equation}
and they are all solvable. Let us end with the following question.

\begin{question}Is $T(G)$ a $2$-group for all finite insolvable groups $G$?
\end{question}

It would also be interesting to find out whether $T(G)$ is a $2$-group for most finite solvable (non-nilpotent) groups $G$, and if so, to give an explanation. 


\section{Groups of squarefree order}\label{section1}

Throughout this section, let $G$ denote a finite group of squarefree order. It is known, by \cite[Lemma 3.5]{squarefree}, that $G$ admits a presentation
\[ G = G(d,e,k) = \langle\sigma,\tau:\sigma^e=1_G,\,\tau^d = 1_G,\, \tau\sigma\tau^{-1}= \sigma^k\rangle,\]
where $d,e\in\bN$, and $k\in\bN$ is coprime to $e$ whose multiplicative order mod $e$ is equal to $d$. Note that $|G| = de$, which is squarefree by assumption, so 
\[ \gcd(d,e) = 1 \mbox{ and both }d,e\mbox{ are squarefree}.\]
Following \cite[Proposition 3.5]{Byott squarefree}, let us write
\[ |G| = de = d(gz),\mbox{ where }z = \gcd(e,k-1)\mbox{ and } g = e/z.\]
Observe that by definition and the fact that $\gcd(g,z)=1$, we have 
\[ k\equiv1\mmod{z},\mbox{ and }k\not\equiv1\mmod{q}\mbox{ for all primes $q\mid g$}.\]
For each divisor $f$ of $g$, define $d_f$ to be the multiplicative order of $k$ mod $f$. Since $\gcd(g,z)=1$ and $z$ divides $k-1$, we have $d = d_g$. Also, let us define
\[\mathfrak{o}(G) = \{(f_1,f_2)\in\bN^2 : g=f_1f_2\mbox{ and }\gcd(d_{f_1},d_{f_2}) \leq 2\}.\]
Note that this set depends only on the values of $g$ and $k$. We shall prove:

\begin{thm}\label{main thm}The group $T(G)$ is elementary $2$-abelian of order $\#\mathfrak{o}(G)$.
\end{thm}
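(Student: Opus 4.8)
The plan is to compute $T(G)$ by directly enumerating the set $\HH_0(G)$ using the characterization in (\ref{HH0}), since $|T(G)| = \#\HH_0(G)$ and the two claims (elementary $2$-abelianness and the count $\#\mathfrak{o}(G)$) can both be read off once I understand $\HH_0(G)$ explicitly. Because $G$ has squarefree order, it is metacyclic with the given presentation $G = G(d,e,k)$, and its automorphism group and holomorph are very concrete. The first step would be to pin down $\Hol(G)$ as a permutation group and, crucially, to classify the regular subgroups $N \lhd \Hol(G)$ with $N \simeq G$. I expect that the coprimality $\gcd(d,e)=1$ forces a great deal of rigidity: the normal cyclic Sylow-type structure of $G$ (the subgroup $\langle\sigma\rangle$ of order $e$) should be canonically detectable inside $\Hol(G)$, so any such $N$ must share the same "$\sigma$-part" up to a controlled twist, and the freedom lives entirely in how the complementary $\langle\tau\rangle$-part and the conjugation action are realized.

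Next I would try to parametrize the candidate regular normal subgroups by factorizations $g = f_1 f_2$, matching the definition of $\mathfrak{o}(G)$. The heuristic is that $e = gz$ splits into the part $z$ on which $k \equiv 1$ (where the action is trivial and hence rigid) and the part $g$ on which $k$ acts nontrivially; over $g$, a factorization $f_1 f_2$ lets one "invert" or "keep" the action on each factor independently, producing a new regular subgroup. The condition $\gcd(d_{f_1}, d_{f_2}) \le 2$ should be exactly the arithmetic constraint under which such a mixed construction yields a subgroup that is (i) still isomorphic to $G$, (ii) still regular, and (iii) still normal in $\Hol(G)$ with the correct normalizer. I would verify this by writing down, for each $(f_1,f_2)\in\mathfrak{o}(G)$, an explicit permutation (a product of the inversion map $\pi_{-1}$ restricted to appropriate factors, analogous to the involution $\pi_{-1}$ mentioned in the introduction) conjugating $\lambda(G)$ to a new member of $\HH_0(G)$, and then show these exhaust $\HH_0(G)$.

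For the group structure of $T(G)$, the key observation is that each generating element should be an involution modulo $\Hol(G)$: the maps that flip the action on a factor $f_i$ square into an inner automorphism or into $\rho(G)$-type elements, landing back in $\Hol(G)$. Showing commutativity should follow because flips on different factors of the coprime decomposition of $g$ act on disjoint arithmetic pieces and therefore commute modulo $\Hol(G)$. Concretely, I would set up a homomorphism from $\mathfrak{o}(G)$, viewed as a group under componentwise "symmetric difference" of factorizations (equivalently, a subgroup of $(\bZ/2\bZ)^{\omega(g)}$ indexed by the prime divisors of $g$), onto $T(G)$, and argue it is an isomorphism by comparing orders via the transitivity of the $\NHol(G)$-action.

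The main obstacle I anticipate is the exhaustiveness direction: proving that \emph{every} regular $N \lhd \Hol(G)$ with $N \simeq G$ arises from one of the admissible factorizations, and in particular deriving the precise cutoff $\gcd(d_{f_1},d_{f_2}) \le 2$ rather than, say, $= 1$. This requires a careful analysis of when two cyclic group actions of orders $d_{f_1}$ and $d_{f_2}$ can be glued into a single order-$d$ cyclic action isomorphic to the original (so that $N \simeq G$), and the value $2$ should enter because inversion contributes an order-$2$ ambiguity that can be absorbed compatibly exactly when the overlap of the two action-orders is at most $2$. Handling the isomorphism-type constraint together with the normality constraint simultaneously, across all factorizations, is where the real bookkeeping lies; I would try to isolate it into a self-contained lemma about metacyclic groups with coprime cyclic factors before assembling the count.
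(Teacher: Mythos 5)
Your overall strategy coincides with the paper's: compute $\#\HH_0(G)$ via the characterization (\ref{HH0}), exploit the metacyclic presentation $G=G(d,e,k)$ together with the explicit structure of $\Aut(G)$, index the members of $\HH_0(G)$ by factorizations $g=f_1f_2$ with $(f_1,f_2)\in\mathfrak{o}(G)$, and obtain elementary $2$-abelianness by showing that coset representatives square into $\Hol(G)$. Even your heuristic for the cutoff $\gcd(d_{f_1},d_{f_2})\leq2$ is the true mechanism: in the paper, the generator playing the role of $\tau$ has $\tau$-exponent $b_2$ forced to satisfy $b_2\equiv1\pmod{d_q}$ for primes $q\mid f_1$ and $b_2\equiv-1\pmod{d_q}$ for primes $q\mid f_2$, and these congruences are simultaneously solvable exactly when $\gcd(d_{f_1},d_{f_2})$ divides $2$.

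The genuine gap is the one you flag yourself and then defer to an unwritten ``self-contained lemma'': the classification step is never carried out, and it is essentially the entire content of the theorem. Concretely, you establish none of the following: (i) that each $(f_1,f_2)\in\mathfrak{o}(G)$ actually yields a regular subgroup $N\lhd\Hol(G)$ with $N\simeq G$; (ii) that distinct factorizations yield distinct members of $\HH_0(G)$; (iii) that every member of $\HH_0(G)$ arises this way. The paper settles all three simultaneously by brute-force linearization rather than by an abstract rigidity argument: every element of $\Hol(G)$ is written uniquely as $\rho(\sigma^a\tau^b)\theta^c\phi_s$ (Lemma~\ref{Aut lem}), a candidate subgroup is generated by a pair $(\Phi_1,\Phi_2)$ subject to the five conditions (\ref{conditions}), and Lemmas~\ref{order e lem}--\ref{normality lem} convert order, relation, regularity, and normality into congruences on the six parameters, assembled in Proposition~\ref{criteria prop}; the count $\#\mathfrak{o}(G)$ then falls out by enumerating admissible tuples and dividing by $|\Aut(G)|=g\varphi(e)$, with the factorization $g=f_1f_2$ emerging from the dichotomy $q\mid c_1$ versus $q\nmid c_1$ for the $\theta$-exponent $c_1$ of $\Phi_1$. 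Note, incidentally, that the rigidity you predict is weaker than you suggest: the ``$\sigma$-part'' of such an $N$ is $\Phi_1=\rho(\sigma^{a_1})\theta^{c_1}$, which genuinely involves the automorphism $\theta$ and is not confined to $\rho(\langle\sigma\rangle)$, so detecting it requires exactly this kind of computation. The same criticism applies to the structure claim: ``flips on disjoint arithmetic pieces commute modulo $\Hol(G)$'' is not something you can certify without explicit formulas for the conjugating permutations (Proposition~\ref{xi prop}), and the paper in fact sidesteps commutativity entirely by checking $\pi_\fm^2\in\Aut(G)$ directly, since exponent $2$ already implies abelian. Until the congruence analysis, or an equivalent of it, is actually supplied, neither the order nor the group structure of $T(G)$ has been proved.
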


\begin{remark}The number $\#\mathfrak{o}(G)$ is indeed a power of $2$. To see why, define a graph $\Gamma(G)$ as follows.
\begin{itemize}
\item The vertex set of $\Gamma(G)$ is the set of distinct prime factors of $g$.
\item Two vertices $q,q'$ are joined by an edge if and only if $\gcd(d_{q},d_{q'})\geq3$.
\end{itemize}
Since $g$ is squarefree, for any divisor $f$ of $g$, we have $d_{f}=\lcm_{q\mid f}d_q$. Hence, a pair $(f_1,f_2)\in\bN^2$ with $g=f_1f_2$  lies in $\mathfrak{o}(G)$ if and only if for each connected component $\Gamma_0$ of $\Gamma(G)$, the primes in $\Gamma_0$ either all divide $f_1$ or all divide $f_2$. We then deduce that $\#\mathfrak{o}(G) = 2^{\mathfrak{c}(G)}$, where $\mathfrak{c}(G)$ is the number of connected components in $\Gamma(G)$.
\end{remark}

To prove Theorem~\ref{main thm}, recall that $T(G)$ has the same size as $\HH_0(G)$. Below, we shall use (\ref{HH0}) to give a description of the elements of $\HH_0(G)$ in terms of certain congruence conditions. 

\subsection{Preliminaries} Let $U(e)$ denote the set of integers $s\in\bZ$ coprime to $e$ and write $\varphi(\cdot)$ for Euler's totient function.

\begin{lem}\label{Aut lem}We have
\[ |\Aut(G)| = g\varphi(e)\mbox{ and }\Aut(G) = \langle\theta\rangle\rtimes\{\phi_s\}_{s\in U(e)},\]
where $\theta$ and $\phi_s$ are the automorphisms on $G$ determined by
\[ \theta(\sigma)=\sigma,\,\ \theta(\tau) = \sigma^z\tau,\,\ \phi_s(\sigma) =\sigma^s,\,\ \phi_s(\tau) = \tau.\]
Moreover, we have the relations
\[ \theta^g = \mathrm{Id}_G,\,\ \phi_s\phi_t = \phi_{st},\,\ \phi_s\theta\phi_s^{-1} = \theta^s.\]
\end{lem}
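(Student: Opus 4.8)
The plan is to pin down, relation by relation, where an arbitrary automorphism can send the generators $\sigma$ and $\tau$, and then to check that every surviving candidate is a genuine automorphism. The first step is to prove that $\langle\sigma\rangle$ is a \emph{characteristic} subgroup, which I would do by exhibiting it as the product of two manifestly characteristic subgroups: the commutator subgroup and the center. A direct computation gives $[\tau,\sigma]=\sigma^{k-1}$, and since $G/\langle\sigma^{k-1}\rangle$ is abelian one gets $[G,G]=\langle\sigma^{k-1}\rangle=\langle\sigma^{z}\rangle$, of order $g$. Likewise $\sigma^{a}$ is central precisely when $a(k-1)\equiv0\pmod e$, so $Z(G)=\langle\sigma^{g}\rangle$, of order $z$. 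As $\gcd(g,z)=1$, the product $[G,G]\cdot Z(G)$ equals $\langle\sigma\rangle$, which is therefore characteristic. Hence every $\alpha\in\Aut(G)$ restricts to an automorphism of $\langle\sigma\rangle\cong\bZ/e\bZ$, forcing $\alpha(\sigma)=\sigma^{s}$ for some $s\in U(e)$.

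Next I would determine $\alpha(\tau)$. Writing $\alpha(\tau)=\sigma^{a}\tau^{b}$, the requirement that $\alpha$ preserve $\tau\sigma\tau^{-1}=\sigma^{k}$ amounts, after conjugating $\sigma^{s}$ by $\sigma^{a}\tau^{b}$, to $sk^{b}\equiv sk\pmod e$; since $\gcd(s,e)=1$ this reads $k^{b-1}\equiv1\pmod e$, i.e. $b\equiv1\pmod d$, and reducing $b$ modulo $d$ gives $b=1$. To constrain $a$ I would use the identity $(\sigma^{a}\tau)^{j}=\sigma^{a(1+k+\cdots+k^{j-1})}\tau^{j}$, so that the relation $\tau^{d}=1_{G}$ becomes $\sigma^{aS}=1_{G}$ with $S=1+k+\cdots+k^{d-1}$.

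The main technical point is to evaluate $S$ modulo $e=zg$, and I expect this to be the one genuinely delicate step. Modulo $z$ one has $k\equiv1$, hence $S\equiv d$; as $\gcd(d,z)=1$, the condition $\sigma^{aS}=1_{G}$ forces $a\equiv0\pmod z$. Modulo $g$ the claim is that $S\equiv0$, which I would verify one prime $q\mid g$ at a time: writing $d=(d/d_{q})d_{q}$ and grouping the $d$ summands into blocks of length $d_{q}$ gives $S\equiv(d/d_{q})(1+k+\cdots+k^{d_{q}-1})\pmod q$, and the inner sum is $(k^{d_{q}}-1)/(k-1)\equiv0\pmod q$ because $k^{d_{q}}\equiv1$ while $k\not\equiv1\pmod q$. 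Since $g$ is squarefree, $S\equiv0\pmod q$ for every $q\mid g$ yields $S\equiv0\pmod g$, so the constraint on $a$ reduces to $z\mid a$. This is precisely where the squarefree hypothesis and the defining properties of $g$, $z$, $k$ are indispensable; the pitfall to avoid is dividing by $k-1$ globally modulo $g$, which is illegitimate, so one must argue prime-by-prime.

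Finally I would close the loop. For each $s\in U(e)$ and each $a$ with $z\mid a$, the assignment $\sigma\mapsto\sigma^{s}$, $\tau\mapsto\sigma^{a}\tau$ preserves all three defining relations by the computations above, hence extends to an endomorphism; its image contains $\sigma^{s}$ and therefore $\sigma$ and $\tau$, so it is surjective and thus an automorphism of the finite group $G$. Counting the $\varphi(e)$ choices for $s$ and the $g=e/z$ choices for $a$ gives $|\Aut(G)|=g\varphi(e)$. Taking $\theta$ and $\phi_{s}$ to be the automorphisms with $(a,s)=(z,1)$ and $(a,s)=(0,s)$, a short computation shows $\theta^{j}(\tau)=\sigma^{jz}\tau$ (so $\theta^{g}=\Id_{G}$ and $\langle\theta\rangle$ has order $g$), $\phi_{s}\phi_{t}=\phi_{st}$, and $\phi_{s}\theta\phi_{s}^{-1}=\theta^{s}$; moreover the general automorphism $\sigma\mapsto\sigma^{s}$, $\tau\mapsto\sigma^{zt}\tau$ equals $\theta^{t}\phi_{s}$, while $\langle\theta\rangle\cap\{\phi_{s}\}_{s\in U(e)}=1$ because the former fixes $\sigma$ and the latter fixes $\tau$. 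This gives the semidirect product decomposition together with all the stated relations.
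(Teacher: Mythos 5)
Your proof is correct, but note that the paper itself gives no argument here at all: its ``proof'' of Lemma~2.1 is the single line ``See the proof of [Alabdali--Byott, Lemma 4.1].'' So your write-up supplies genuine content, and it follows what is essentially the standard (and, structurally, the cited) route: show $\langle\sigma\rangle$ is characteristic, deduce $\alpha(\sigma)=\sigma^s$ with $s\in U(e)$ and $\alpha(\tau)=\sigma^a\tau$, extract the constraint $aS\equiv0\pmod e$ with $S=S(k,d)$, evaluate $S$ prime-by-prime ($S\equiv d\pmod z$, $S\equiv0\pmod q$ for each prime $q\mid g$), and then run von Dyck's theorem plus finiteness to get the converse and the count $g\varphi(e)$, after which the relations for $\theta$ and $\phi_s$ are direct computations. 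Your prime-by-prime evaluation of $S$ modulo $g$ is exactly right, and your caution about not inverting $k-1$ globally is well placed. The one genuine (though small) gap is in the identification $Z(G)=\langle\sigma^g\rangle$: you only determine which powers of $\sigma$ are central, but to conclude the equality --- and hence that $[G,G]\cdot Z(G)=\langle\sigma\rangle$, which is what makes $\langle\sigma\rangle$ characteristic --- you also need that every central element lies in $\langle\sigma\rangle$. This is a one-line check: if $\sigma^a\tau^b$ commutes with $\sigma$, then comparing $\sigma^a\tau^b\sigma=\sigma^{a+k^b}\tau^b$ with $\sigma^{a+1}\tau^b$ forces $k^b\equiv1\pmod e$, and since $k$ has multiplicative order exactly $d$ modulo $e$, this gives $d\mid b$, i.e.\ $b\equiv 0$. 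With that line inserted, the proof is complete.
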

\begin{proof}See the proof of \cite[Lemma 4.1]{Byott squarefree}.
\end{proof}

It is easy to see that we have the relations
\begin{align*}
\mbox{in $G$:}\hspace{1.05cm}\tau^b\sigma^a &= \sigma^{ak^b}\tau^b&\mbox{for any }a,b\in\bZ,\\
\mbox{in $\Aut(G)$:}\hspace{1.1cm}\phi_s\theta^c & = \theta^{cs}\phi_s &\mbox{for any }c\in\bZ,s\in U(e),\\
\mbox{in $\Hol(G)$:}\hspace{0.75cm}\pi\rho(x) &= \rho(\pi(x))\pi&\mbox{for any }x\in G,\pi\in\Aut(G),
\end{align*}
where $k^{-1}$ is to be interpreted as the multiplicative inverse of $k$ mod $e$ for $b$ negative. Following \cite[(5)]{Byott squarefree}, for $h\in\bZ$ and $i\in\bN_{\geq0}$, let us define
\[ S(h,i) = \sum_{\ell=0}^{i-1} h^\ell = 1 + h + \cdots + h^{i-1},\]
with the empty sum $S(h,0)$ representing zero. Then, as in \cite[(6)]{Byott squarefree}, we have
\[ (\sigma^a\tau)^b = \sigma^{aS(k,b)}\tau^b\mbox{ for any }a\in\bZ,b\in\bN_{\geq0}.\]
Using this, a simple induction on $c$ shows that
\[(\theta^c\phi_s)(\sigma^a\tau^b) = \sigma^{as + czS(k,b)}\tau^b\mbox{ for any }a,c\in\bZ,b\in\bN_{\geq0},s\in U(e).\]
We shall frequently use above relations without referring to them explicitly. In what follows, consider an arbitrary element
\[\Phi= \rho(\sigma^{a}\tau^{b})\theta^{c}\phi_{s}\hspace{1em}(a,c\in\bZ,b\in\bN_{\geq0},s\in U(e)) \]
of $\Hol(G)$. The next lemma is essentially \cite[Lemma 4.2]{Byott squarefree}.

\begin{lem}\label{Phi i lem}For any $i\in\bN_{\geq0}$, we have
\[ \Phi^i =  \rho(\sigma^{A_{\Phi}(i)}\tau^{bi})\theta^{cS(s,i)}\phi_{s^i},\]
where we define
\[ A_\Phi(i) = aS(sk^b,i) + czS(k,b)\sum_{\ell=1}^{i-1}S(s,\ell)k^{b\ell}.\]
\end{lem}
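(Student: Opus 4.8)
The plan is to prove the formula by induction on $i$, using the three commutation relations displayed just before the statement together with the closed form $(\theta^c\phi_s)(\sigma^a\tau^b) = \sigma^{as + czS(k,b)}\tau^b$ and the fact that $\rho$ is a homomorphism (so $\rho(u)\rho(v)=\rho(uv)$). For the base case $i=0$, the empty-sum conventions give $S(sk^b,0)=0$ and an empty summation, so $A_\Phi(0)=0$, while $\tau^{b\cdot0}$, $\theta^{cS(s,0)}$, and $\phi_{s^0}$ are all trivial; hence both sides equal $\mathrm{Id}_G$.

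For the inductive step I would write $\Phi^{i+1}=\Phi^i\Phi$ and substitute the inductive hypothesis, obtaining
\[ \Phi^{i+1} = \rho(\sigma^{A_\Phi(i)}\tau^{bi})\,\theta^{cS(s,i)}\phi_{s^i}\,\rho(\sigma^a\tau^b)\,\theta^c\phi_s. \]
The first task is to move the automorphism $\theta^{cS(s,i)}\phi_{s^i}$ to the right past $\rho(\sigma^a\tau^b)$. Applying $\pi\rho(x)=\rho(\pi(x))\pi$ with $\pi=\theta^{cS(s,i)}\phi_{s^i}$ and the closed form above (with $c$ replaced by $cS(s,i)$ and $s$ by $s^i$), this automorphism sends $\sigma^a\tau^b$ to $\sigma^{as^i+cS(s,i)zS(k,b)}\tau^b$. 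Writing $m=as^i+cS(s,i)zS(k,b)$, the two $\rho$-factors merge into $\rho(\sigma^{A_\Phi(i)}\tau^{bi}\sigma^m\tau^b)$, and the relation $\tau^{bi}\sigma^m=\sigma^{mk^{bi}}\tau^{bi}$ in $G$ collapses this to the single factor $\rho(\sigma^{A_\Phi(i)+mk^{bi}}\tau^{b(i+1)})$. Meanwhile the two automorphism factors combine: pushing $\phi_{s^i}$ past $\theta^c$ via $\phi_s\theta^c=\theta^{cs}\phi_s$ gives $\theta^{cS(s,i)+cs^i}\phi_{s^{i+1}}$, and the identity $S(s,i)+s^i=S(s,i+1)$ simplifies the $\theta$-exponent to $cS(s,i+1)$, as required.

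The main obstacle is the final bookkeeping step: verifying that $A_\Phi(i)+mk^{bi}=A_\Phi(i+1)$, i.e.\ that the accumulated $\sigma$-exponent matches the claimed closed form. I would compute the difference $A_\Phi(i+1)-A_\Phi(i)$ directly. In the first summand, the telescoping definition of $S$ yields $S(sk^b,i+1)-S(sk^b,i)=(sk^b)^i=s^ik^{bi}$, contributing $as^ik^{bi}$; in the second summand, extending the summation range from $\ell\le i-1$ to $\ell\le i$ adds the single term $czS(k,b)S(s,i)k^{bi}$. Factoring $k^{bi}$ out of $as^ik^{bi}+czS(k,b)S(s,i)k^{bi}$ recovers exactly $mk^{bi}$, closing the induction. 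The delicate point throughout is keeping the two contributions to $m$ correctly aligned---the $a$-term arising from $S(sk^b,\cdot)$ and the $cz$-term from the summation---since a misplaced power of $s^i$ or $k^{bi}$ would break the match.
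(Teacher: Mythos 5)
Your proof is correct and follows essentially the same route as the paper's: induction on $i$, commuting the automorphism $\theta^{cS(s,i)}\phi_{s^i}$ past $\rho(\sigma^a\tau^b)$ via the closed-form action, merging the $\rho$-factors with $\tau^{bi}\sigma^m=\sigma^{mk^{bi}}\tau^{bi}$, and matching the resulting $\sigma$-exponent against $A_\Phi(i+1)$. The bookkeeping identity $A_\Phi(i+1)-A_\Phi(i)=(as^i+czS(k,b)S(s,i))k^{bi}$ that you verify is exactly the simplification carried out in the paper.
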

\begin{proof}We shall use induction. The case $i=0$ is clear. Suppose now that the claim holds for $i$. Then, we have
\begin{align*}\Phi^{i+1} & = \rho(\sigma^{A_{\Phi}(i)}\tau^{bi})\theta^{cS(s,i)}\phi_{s^i}\cdot\rho(\sigma^a\tau^b)\theta^c\phi_s\\
& = \rho(\sigma^{A_{\Phi}(i)}\tau^{bi}\cdot\sigma^{as^i+cS(s,i)zS(k,b)}\tau^b)\cdot\theta^{cS(s,i)}\phi_{s^i}\cdot\theta^c\phi_s\\
& = \rho(\sigma^{A_{\Phi}(i) + (as^i + czS(k,b)S(s,i))k^{bi}}\tau^{b(i+1)})\cdot\theta^{c(S(s,i)+s^i)}\phi_{s^{i+1}}.
\end{align*}
Plainly, we have $S(s,i)+s^i = S(s,i+1)$, and the exponent of $\sigma$ simplifies to
\[ a(S(sk^b,i)+(sk^b)^i) + czS(k,b)\left(\sum_{\ell=1}^{i-1}S(s,\ell)k^{b\ell} + S(s,i)k^{bi} \right),\]
which is clearly equal to $A_{\Phi}(i+1)$. This proves the claim.
\end{proof}


\begin{lem}\label{order e lem} The element $\Phi$ has order $e$ and satisfies
\begin{equation}\label{order e}\#\{\Phi^i(1_G):i\in\bN_{\geq0}\} = e\end{equation}
precisely when
\[ b\equiv0\mmod{d},\,\ s\equiv1\mmod{e},\,\ \gcd(a,e)=1.\]
\end{lem}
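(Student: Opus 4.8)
The plan is to read everything off the explicit formula for $\Phi^i$ in Lemma~\ref{Phi i lem}. Since $\rho(w)(1_G)=w^{-1}$ and automorphisms fix $1_G$, applying $\Phi^i$ to $1_G$ annihilates the factor $\theta^{cS(s,i)}\phi_{s^i}$ and leaves $\Phi^i(1_G)=(\sigma^{A_\Phi(i)}\tau^{bi})^{-1}$. Likewise, because $\sigma,\tau,\theta$ have orders $e,d,g$ and $\gcd(d,e)=1$, the equation $\Phi^i=\Id$ is equivalent to the four conditions $A_\Phi(i)\equiv0\pmod e$, $bi\equiv0\pmod d$, $cS(s,i)\equiv0\pmod g$, and $s^i\equiv1\pmod e$. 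So the whole lemma reduces to bookkeeping with these congruences.

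The first step is to extract the easy reductions from ``$\Phi$ has order $e$'', i.e.\ from $\Phi^e=\Id$. The factors $\phi_{s^e}$ and $\tau^{be}$ must be trivial, and since $\gcd(d,e)=1$ the relation $be\equiv0\pmod d$ already forces $b\equiv0\pmod d$, which is the first of the three sought congruences. This collapses $A_\Phi$ dramatically: with $d\mid b$ we get $\tau^b=1_G$ and $k^b\equiv1\pmod e$ (as $\mathrm{ord}_e(k)=d$), while $S(k,b)\equiv0\pmod g$ because $k-1$ is a unit modulo $g$ (for every prime $q\mid g$ we have $k\not\equiv1\pmod q$) and $k^b\equiv1\pmod g$; hence $zS(k,b)\equiv0\pmod e$. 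Substituting into the formula for $A_\Phi(i)$ leaves $A_\Phi(i)\equiv aS(s,i)\pmod e$, so that $\Phi^i(1_G)=\sigma^{-aS(s,i)}\in\langle\sigma\rangle$.

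The heart of the matter is the orbit count. Since $\Phi$ has order $e$, the orbit of $1_G$ has size $[\langle\Phi\rangle:\mathrm{Stab}]$, and $\Phi^i(1_G)=1_G$ iff $aS(s,i)\equiv0\pmod e$. Analyzing this one prime $p\mid e$ at a time, and using $(s-1)S(s,i)=s^i-1$ together with the squarefreeness of $e$, the solution set is exactly $L\bZ$, where $L=\lcm_{p\mid e,\,p\nmid a}n_p$ with $n_p=p$ when $s\equiv1\pmod p$ and $n_p=\mathrm{ord}_p(s)$ (a divisor of $p-1$, hence coprime to $p$ and smaller than $p$) otherwise. Thus the orbit size equals $L$, and $\Phi^e=\Id$ forces $L\mid e$. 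Consequently the orbit condition $\#\{\Phi^i(1_G)\}=e$ is equivalent to $L=e=\prod_{p\mid e}p$.

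It remains to prove that $L=e$ is equivalent to $s\equiv1\pmod e$ and $\gcd(a,e)=1$, which supplies the two remaining congruences. One direction is immediate, since those conditions make every relevant $n_p$ equal to $p$. For the converse—the step I expect to be the main obstacle—the key observation is that a prime $p\mid e$ can divide $L$ only through its own factor $n_p=p$ or through $\mathrm{ord}_q(s)$ for a \emph{strictly larger} prime $q\mid e$. I plan a downward induction on the primes dividing $e$: the largest prime can arise only from its own $n_p=p$ (all smaller $n_q<p$), and once every larger prime $q$ is known to satisfy $n_q=q$—so that it contributes only $q$, never a divisor of $q-1$—the same forcing applies at each smaller prime. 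This yields $n_p=p$ and $p\nmid a$ for all $p\mid e$, i.e.\ $s\equiv1\pmod e$ and $\gcd(a,e)=1$. Finally, I would close the loop by checking directly that the three congruences give both conclusions: with $s\equiv1\pmod e$ one has $A_\Phi(i)\equiv ai$ and $\Phi^i=\rho(\sigma^{ai})\theta^{ci}$, whence $\gcd(a,e)=1$ makes $i=e$ the first return both of $\Phi^i$ to $\Id$ and of $\Phi^i(1_G)$ to $1_G$.
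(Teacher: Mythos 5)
Your proof is correct, and its skeleton matches the paper's: both arguments read everything off Lemma~\ref{Phi i lem}, force $b\equiv0\pmod{d}$ from $\Phi^e=\Id_G$ using $\gcd(d,e)=1$, reduce $A_\Phi(i)$ to $aS(s,i)$ modulo $e$ (you supply the verification $zS(k,b)\equiv0\pmod{e}$, which the paper leaves implicit), and check the converse by direct computation. Where you genuinely diverge is the heart of the forward direction. The paper encodes the two hypotheses as the injectivity statement $aS(s,i)\equiv aS(s,i')\pmod{e}\iff i\equiv i'\pmod{e}$, reads off $\gcd(a,e)=1$ from it, and then gets $s\equiv1\pmod{e}$ by a one-shot size contradiction: letting $\mathcal{P}$ be the set of primes $p\mid e$ with $s\not\equiv1\pmod{p}$, the order of $s$ mod $e$ would have to be divisible by $\prod_{p\in\mathcal{P}}p$ while dividing $\prod_{p\in\mathcal{P}}(p-1)$, which is impossible unless $\mathcal{P}=\emptyset$. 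You instead compute the orbit length exactly: orbit--stabilizer plus a prime-by-prime analysis of $aS(s,i)\equiv0\pmod{e}$ gives orbit size $L=\lcm_{p\mid e,\,p\nmid a}n_p$, and a downward induction on the primes dividing $e$ shows that $L=e$ forces $n_p=p$ and $p\nmid a$ for every $p$ (so the $\gcd$ condition comes out of the same induction rather than separately). Both routes turn on the same arithmetic kernel, namely that $\mathrm{ord}_p(s)$ divides $p-1$ and so can never supply the prime $p$ itself; your version is somewhat longer but buys an explicit orbit-size formula valid for any $\Phi$ with $d\mid b$, which is extra reusable information, while the paper's contradiction is shorter. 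Two small points worth making explicit in a write-up: the stabilizer computation needs $L\mid\mathrm{ord}(\Phi)$ (immediate, since $\Phi^{\mathrm{ord}(\Phi)}=\Id_G$ fixes $1_G$), and in the converse the identity $\Phi^e=\Id_G$ uses $g\mid e$ to kill the factor $\theta^{ce}$.
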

\begin{proof}We may assume $b\equiv0$ (mod $d$), for otherwise $\Phi^e\neq\Id_G$ by Lemma~\ref{Phi i lem} because $\gcd(d,e)=1$. In this case, for any $i\in\bN_{\geq0}$, we have
\[ \Phi^i(1_G) = \sigma^{-A_{\Phi}(i)}\mbox{ and }A_{\Phi}(i) \equiv aS(s,i)\mmod{e}.\]
Suppose that $\Phi$ has order $e$ and (\ref{order e}) holds. For any $i,i'\in\bN_{\geq0}$, we then have
\begin{equation}\label{iff} aS(s,i) \equiv aS(s,i')\mmod{e}\iff i \equiv i' \mmod{e}.\end{equation}
This clearly yields $\gcd(a,e)=1$. The above also implies that $s\equiv1$ (mod $e$). To see why, consider the set
\[ \mathcal{P} = \{p:p\mbox{ is a prime divisor of $e$ such that $i_p\neq1$}\},\]
where $i_p$ denotes the multiplicative order of $s$ mod $p$. Also, let $i$ denote the multiplicative order of $s$ mod $e$, and note that $S(s,i)\equiv0$ (mod $p$) for $p\in\mathcal{P}$. Since $e$ is squarefree, we then deduce that
\[ S(s, e^*i) \equiv e^*S(s,i) \equiv 0\mmod{e},\mbox{ where }e^* = e\prod_{p\in\mathcal{P}}p^{-1}.\]
It follows from (\ref{iff}) that
\[ e^*i\equiv0\mmod{e}\mbox{ and so }\prod_{p\in\mathcal{P}}p\mbox{ divides }i.\]
But we also know that
\[ i = \lcm_{p\in\mathcal{P}}i_p \mbox{ and so }i\mbox{ divides }\prod_{p\in\mathcal{P}}(p-1).\]
This leads to a contradiction unless $\mathcal{P}$ is empty. It follows that $i_p=1$ for all prime divisors $p$ of $e$. Since $e$ is squarefree, this implies that $s\equiv1$ (mod $e$), as claimed. The converse is easily verified. 
\end{proof}

\begin{lem}\label{order d lem}Assume that $b$ is coprime to $d$. Then, the element $\Phi$ has order $d$ precisely when
\[ A_{\Phi}(d)\equiv0\mmod{e},\,\ cS(s,d)\equiv0\mmod{g},\,\ s^d\equiv1\mmod{e}.\]
\end{lem}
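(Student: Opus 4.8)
The plan is to read off from Lemma~\ref{Phi i lem} exactly when $\Phi^i=\Id_G$ for a general exponent $i$, then specialize to $i=d$ and invoke the hypothesis $\gcd(b,d)=1$ to promote the equation $\Phi^d=\Id_G$ to the statement that $\Phi$ has order exactly $d$. The first step is to record when an arbitrary element of $\Hol(G)$ is trivial. Since $\Hol(G)=\rho(G)\rtimes\Aut(G)$ is an internal semidirect product and $\rho$ is injective, an element $\rho(\sigma^{A}\tau^{B})\theta^{c'}\phi_{s'}$ equals $\Id_G$ if and only if $\sigma^{A}\tau^{B}=1_G$ and $\theta^{c'}\phi_{s'}=\Id_G$. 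Using that $\langle\sigma\rangle$ is a normal cyclic subgroup of order $e$ with quotient cyclic of order $d$ generated by the image of $\tau$, the first equation decouples into $B\equiv0\pmod{d}$ together with $A\equiv0\pmod{e}$. Likewise, because $\Aut(G)=\langle\theta\rangle\rtimes\{\phi_s\}_{s\in U(e)}$ with $\theta$ of order $g$ by Lemma~\ref{Aut lem}, and because $\phi_{s'}=\Id_G$ precisely when $s'\equiv1\pmod{e}$, the second equation decouples into $c'\equiv0\pmod{g}$ and $s'\equiv1\pmod{e}$.

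Next I would substitute the data of Lemma~\ref{Phi i lem}, namely $A=A_{\Phi}(i)$, $B=bi$, $c'=cS(s,i)$, and $s'=s^{i}$. This shows that $\Phi^i=\Id_G$ holds if and only if the four congruences $A_{\Phi}(i)\equiv0\pmod{e}$, $bi\equiv0\pmod{d}$, $cS(s,i)\equiv0\pmod{g}$, and $s^{i}\equiv1\pmod{e}$ all hold. Taking $i=d$, the second congruence $bd\equiv0\pmod{d}$ is automatic, so $\Phi^d=\Id_G$ is equivalent to the three conditions in the statement. This already settles the forward implication: if $\Phi$ has order $d$ then $\Phi^d=\Id_G$, whence the three stated congruences.

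For the reverse implication I would show that the three conditions force the order to be exactly $d$ rather than a proper divisor. Granting $\Phi^d=\Id_G$, the order $m$ of $\Phi$ divides $d$; applying the characterization above with $i=m$ gives $bm\equiv0\pmod{d}$, and since $\gcd(b,d)=1$ this yields $d\mid m$, so $m=d$. The only delicate point is the bookkeeping in the first step, namely verifying that the two semidirect-product decompositions genuinely split the identity condition into the four independent congruences; once the relevant orders are in hand ($\langle\sigma\rangle$ of order $e$, its quotient of order $d$, and $\theta$ of order $g$), this is routine. The coprimality hypothesis $\gcd(b,d)=1$ is precisely what rules out $\Phi$ having order a proper divisor of $d$, and is the crux of the reverse direction.
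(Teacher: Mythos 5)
Your proposal is correct and follows essentially the same route as the paper's proof: both use Lemma~\ref{Phi i lem} together with the semidirect-product decompositions of $\Hol(G)$ and $\Aut(G)$ to show that $\Phi^d=\Id_G$ is equivalent to the three stated congruences, and then use $\gcd(b,d)=1$ (via the exponent $bi$ of $\tau$ in $\Phi^i$) to rule out any smaller order. The paper merely compresses the bookkeeping you spell out; no substantive difference.
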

\begin{proof}By Lemma~\ref{Phi i lem}, the stated conditions are equivalent to $\Phi^d = \mbox{Id}_G$. In this case, the order of $\Phi$ is exactly $d$ because for any $i\in\bN_{\geq0}$, the exponent of $\tau$ in the expression for $\Phi^i$ in Lemma~\ref{Phi i lem} is $bi$, and $b$ is coprime to $d$.\end{proof}

\subsection{Congruence criteria}

In this subsection, let us fix two elements
\begin{align*}\Phi_1 &= \rho(\sigma^{a_1}\tau^{b_1})\theta^{c_1}\phi_{s_1}\hspace{1em}(a_1,c_1\in\bZ,b_1\in\bN_{\geq0},s_1\in U(e)) \\
\Phi_2 &= \rho(\sigma^{a_2}\tau^{b_2})\theta^{c_2}\phi_{s_2}\hspace{1em}(a_2,c_2\in\bZ,b_2\in\bN_{\geq0},s_2\in U(e))  
\end{align*}
of $\Hol(G)$. In order to compute $\HH_0(G)$, we shall determine exactly when
\begin{equation}\label{conditions}
\begin{cases}
\mbox{$\Phi_1$ has order $e$}\\
\mbox{$\Phi_2$ has order $d$}\\
\mbox{$\Phi_2\Phi_1\Phi_2^{-1} = \Phi_1^k$}\\
\mbox{$\langle\Phi_1,\Phi_2\rangle$ is regular}\\
\mbox{$\langle\Phi_1,\Phi_2\rangle$ is normal in $\Hol(G)$}
\end{cases}
\end{equation}
are all satisfied. 
By Lemma~\ref{order e lem}, we may and shall assume that
\[\label{b1 s1 cong}b_1\equiv0\mmod{d}\mbox{ and } s_1\equiv1\mmod{e}.\]
Then, for any $i,j\in\bN_{\geq0}$, using Lemma~\ref{Phi i lem}, we compute that
\begin{align}\label{ij shape}
\Phi_1^i\Phi_2^j & = (\rho(\sigma^{a_1})\theta^{c_1})^i\cdot(\rho(\sigma^{a_2}\tau^{b_2})\theta^{c_2}\phi_{s_2})^j\\\notag
& = \rho(\sigma^{a_1i})\theta^{c_1i}\cdot\rho(\sigma^{A_{\Phi_2}(j)}\tau^{b_2j})\theta^{c_2S(s_2,j)}\phi_{s_2^j}\\\notag
& = \rho(\sigma^{a_1i}\cdot\sigma^{A_{\Phi_2}(j)+c_1izS(k,b_2j)}\tau^{b_2j})\cdot\theta^{c_1i+c_2S(s_2,j)}\phi_{s_2^j}\\\notag
& = \rho(\sigma^{(a_1+c_1zS(k,b_2j))i + A_{\Phi_2}(j)}\tau^{b_2j})\cdot\theta^{c_1i+c_2S(s_2,j)}\phi_{s_2^j}.
\end{align}
This expression shall be used repeatedly in the subsequent calculations.

\begin{lem}\label{k lem}We have $\Phi_2\Phi_1\Phi_2^{-1} = \Phi_1^k$ precisely when
\begin{align*}
a_1(s_2k^{b_2-1}-1) &\equiv c_1zS(k,b_2)\mmod{e},\\
c_1(s_2-k)&\equiv0\mmod{g}.
\end{align*}
\end{lem}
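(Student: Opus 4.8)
The plan is to compute $\Phi_2\Phi_1\Phi_2^{-1}$ explicitly and compare it with $\Phi_1^k$, reducing the operator identity to congruences on the exponents. The key simplification comes from the standing assumptions $b_1\equiv0\pmod{d}$ and $s_1\equiv1\pmod{e}$, which force $\Phi_1 = \rho(\sigma^{a_1})\theta^{c_1}$ since $\tau^{b_1}=1_G$ and $\phi_{s_1}=\mathrm{Id}_G$. This makes both sides comparatively clean: $\Phi_1$ involves no $\tau$-translation and no $\phi$-factor.

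First I would compute $\Phi_1^k$ directly from Lemma~\ref{Phi i lem} applied to $\Phi_1$. Since the $\tau$-exponent and the $\phi$-subscript of $\Phi_1$ are trivial, the formula collapses and I expect $\Phi_1^k = \rho(\sigma^{a_1 k})\theta^{c_1 k}$, using that $S(s_1,k)\equiv k$ and $s_1^k\equiv1$ modulo $e$ from $s_1\equiv1$. Next I would compute the conjugate $\Phi_2\Phi_1\Phi_2^{-1}$. Rather than expand $\Phi_2^{-1}$ separately, the efficient route is to compute $\Phi_2\Phi_1$ and $\Phi_1^k\Phi_2$ (equivalently, verify $\Phi_2\Phi_1 = \Phi_1^k\Phi_2$) using the product formula~(\ref{ij shape}) and the commutation relations $\pi\rho(x)=\rho(\pi(x))\pi$, $\phi_s\theta^c=\theta^{cs}\phi_s$, and $\tau^b\sigma^a=\sigma^{ak^b}\tau^b$. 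This avoids inverting $\Phi_2$ and keeps everything as products of the standard generators $\rho(\sigma^*\tau^*)\theta^*\phi_*$.

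After reducing both $\Phi_2\Phi_1$ and $\Phi_1^k\Phi_2$ to the normal form $\rho(\sigma^{?}\tau^{?})\theta^{?}\phi_{?}$, I would equate the four exponents. The $\tau$-exponent and the $\phi_{s_2}$-subscript should match automatically (both sides carry a single $\Phi_2$, hence the same $b_2$ and $s_2^1$), so those give no new condition. The content lives in matching the $\sigma$-exponent and the $\theta$-exponent. Matching the $\theta$-exponents should produce a congruence modulo $g$ (since $\theta^g=\mathrm{Id}_G$), and I expect it to simplify to $c_1(s_2-k)\equiv0\pmod{g}$ after using $\phi_{s_2}\theta^{c_1}=\theta^{c_1 s_2}\phi_{s_2}$ to pull the $\phi_{s_2}$ past the $\theta^{c_1}$ on one side versus the factor of $k$ introduced by the exponent in $\Phi_1^k$. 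Matching the $\sigma$-exponents modulo $e$, after carefully tracking the $k^{b_2}$ factor coming from commuting $\tau^{b_2}$ past $\sigma^{a_1}$ and the $zS(k,b_2)$ term from the $\theta$-action, should yield $a_1(s_2 k^{b_2-1}-1)\equiv c_1 z S(k,b_2)\pmod{e}$.

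The main obstacle I anticipate is the bookkeeping in the $\sigma$-exponent: the interplay between the $k^{b_2}$ twist (from moving $\tau^{b_2}$ through $\sigma$-powers), the $zS(k,b_2)$ contribution (from the $\theta^{c_1}$ acting through the relation $(\theta^c\phi_s)(\sigma^a\tau^b)=\sigma^{as+czS(k,b)}\tau^b$), and the overall factor arising from conjugation. Getting the precise exponent $s_2 k^{b_2-1}$ rather than $s_2 k^{b_2}$ requires correctly attributing one factor of $k$ to the conjugation structure, and I would double-check this by specializing to small cases (e.g. $c_1=0$, or $b_2=1$). The $\theta$-exponent comparison is more routine, but I would verify that working modulo $g$ (not $e$) is justified, which rests on $\theta$ having order exactly $g$ together with $\gcd(g,z)=1$.
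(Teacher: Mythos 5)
Your proposal is correct and follows essentially the same route as the paper: the paper likewise avoids inverting $\Phi_2$ by rewriting the condition as $\Phi_2\Phi_1 = \Phi_1^k\Phi_2$, computes $\Phi_2\Phi_1$ with the commutation relations and $\Phi_1^k\Phi_2$ from (\ref{ij shape}) with $A_{\Phi_2}(1)=a_2$, and then equates the $\sigma$- and $\theta$-exponents (the $\tau$- and $\phi$-parts matching automatically). The subtlety you flag about the factor $k$ is exactly right: the raw $\sigma$-exponent comparison reads $a_1s_2k^{b_2} \equiv a_1k + c_1zkS(k,b_2) \pmod{e}$, and dividing by $k$ (coprime to $e$) produces the stated $s_2k^{b_2-1}$.
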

\begin{proof}On the one hand, we have
\begin{align*}
\Phi_2\Phi_1& = \rho(\sigma^{a_2}\tau^{b_2})\theta^{c_2}\phi_{s_2}\cdot \rho(\sigma^{a_1})\theta^{c_1}\\
& = \rho(\sigma^{a_2}\tau^{b_2}\cdot\sigma^{a_1s_2})\cdot\theta^{c_2}\phi_{s_2}\cdot\theta^{c_1}\\
& = \rho(\sigma^{a_2+a_1s_2k^{b_2}}\tau^{b_2})\cdot\theta^{c_2+c_1s_2}\phi_{s_2}.
\end{align*}
On the other hand, from (\ref{ij shape}) and the fact that $A_{\Phi_2}(1) = a_2$, we have
\[ \Phi_1^k\Phi_2 = \rho(\sigma^{(a_1+c_1zS(k,b_2))k+a_2}\tau^{b_2})\cdot\theta^{c_1k+c_2}\phi_{s_2}.\]
The claim then follows by equating the exponents of $\sigma$ and $\theta$.
\end{proof}

\begin{lem}\label{regularity lem}Assume that $\Phi_2$ has order $d$. Then, we have
\begin{equation}\label{Phi1Phi2}\{(\Phi_1^i\Phi_2^j)(1_G) : i,j\in\bN_{\geq0}\} = G\end{equation}
precisely when
\[ \gcd(b_2,d)=1\mbox{ and }\gcd(a_1 + c_1zS(k,b_2j),e)=1\mbox{ for all }j\in\bN_{\geq0}.\]
\end{lem}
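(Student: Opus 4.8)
The plan is to exploit the explicit normal form for $\Phi_1^i\Phi_2^j$ recorded in (\ref{ij shape}) together with the fact that every element of $G$ is uniquely $\sigma^m\tau^n$ with $0\le m<e$ and $0\le n<d$. Writing $\alpha(j)=a_1+c_1zS(k,b_2j)$, equation (\ref{ij shape}) gives $\Phi_1^i\Phi_2^j=\rho(\sigma^{\alpha(j)i+A_{\Phi_2}(j)}\tau^{b_2j})\cdot(\text{automorphism})$, so evaluating at $1_G$ yields $(\Phi_1^i\Phi_2^j)(1_G)=(\sigma^{\alpha(j)i+A_{\Phi_2}(j)}\tau^{b_2j})^{-1}$. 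Since inversion is a bijection of $G$, the set in (\ref{Phi1Phi2}) equals $G$ if and only if the map $\bN_{\geq0}^2\to\bZ/e\times\bZ/d$ sending $(i,j)$ to $(\alpha(j)i+A_{\Phi_2}(j)\bmod e,\ b_2j\bmod d)$ is surjective. This reformulation is the whole content of the lemma, and the two stated congruences will turn out to govern the two coordinates separately.

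For sufficiency I would argue directly, with essentially no computation. Given a target $(\bar m,\bar n)\in\bZ/e\times\bZ/d$, I first use $\gcd(b_2,d)=1$ to pick $j$ with $b_2j\equiv\bar n\pmod d$; then, since $\gcd(\alpha(j),e)=1$, the integer $\alpha(j)$ is invertible modulo $e$, so I can solve $\alpha(j)i\equiv\bar m-A_{\Phi_2}(j)\pmod e$ for $i$. The pair $(i,j)$ then hits $(\bar m,\bar n)$, giving surjectivity.

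The necessity direction is the main obstacle, and it is where the arithmetic of $G$ genuinely enters. The condition $\gcd(b_2,d)=1$ is forced at once, since otherwise $\{b_2j\bmod d:j\geq0\}$ is a proper subgroup of $\bZ/d$ and the second coordinate is not surjective. The delicate point is to show that $\gcd(\alpha(j),e)=1$ must hold for \emph{every} $j$, not merely for one representative in each class modulo $d$. I would argue by contraposition: assuming $\gcd(b_2,d)=1$, suppose a prime $p$ divides both $\alpha(j^*)$ and $e$ for some $j^*$, and show the map misses a point. The key is that the obstruction at $j^*$ propagates across the entire fibre $\{j\equiv j^*\pmod d\}$, which, because $\gcd(b_2,d)=1$, is exactly the set of $j$ producing the $\tau$-component $b_2j^*\bmod d$. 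Concretely I would establish, for all $j\equiv j^*\pmod d$, that (i) $\alpha(j)\equiv0\pmod p$ and (ii) $A_{\Phi_2}(j)\equiv A_{\Phi_2}(j^*)\pmod e$. Fact (ii) is immediate from $\Phi_2^d=\Id_G$, so that $\Phi_2^j=\Phi_2^{j^*}$ and Lemma~\ref{Phi i lem} equates the $\sigma$-exponents. For (i) I would split into the cases $p\mid z$ and $p\mid g$: when $p\mid z$ the term $c_1zS(k,b_2j)$ vanishes modulo $p$, so $\alpha(j)\equiv a_1\pmod p$ independently of $j$; when $p\mid g$ one has $k\not\equiv1\pmod p$, and the order $d_p$ of $k$ modulo $p$ divides $d$, so $j\equiv j^*\pmod d$ forces $b_2j\equiv b_2j^*\pmod{d_p}$, whence $k^{b_2j}\equiv k^{b_2j^*}$ and $S(k,b_2j)\equiv S(k,b_2j^*)\pmod p$. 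Combining (i) and (ii), the $\sigma$-exponent $\alpha(j)i+A_{\Phi_2}(j)$ is frozen at $A_{\Phi_2}(j^*)\bmod p$ for every $i$ and every such $j$, so no element with $\tau$-component $b_2j^*$ and $\sigma$-component incongruent to $A_{\Phi_2}(j^*)$ modulo $p$ is ever hit, and surjectivity fails. The essential input driving this propagation is precisely the divisibility $d_p\mid d$ paired with the periodicity $\Phi_2^d=\Id_G$.
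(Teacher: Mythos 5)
Your proof is correct, and its skeleton matches the paper's: both start from the normal form (\ref{ij shape}), use the fact that inversion is a bijection of $G$ to reduce (\ref{Phi1Phi2}) to surjectivity of $(i,j)\mapsto(\alpha(j)i+A_{\Phi_2}(j)\bmod e,\ b_2j\bmod d)$ where $\alpha(j)=a_1+c_1zS(k,b_2j)$, force $\gcd(b_2,d)=1$ from the $\tau$-coordinate, and reduce the $\sigma$-coordinate to invertibility modulo $e$ of the coefficient of the arithmetic progression. The one genuine divergence is your treatment of the delicate point you correctly identified: that $\gcd(\alpha(j),e)=1$ is forced for \emph{every} $j$, not just one representative per class modulo $d$. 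You propagate the obstruction from $j^*$ to all $j\equiv j^*\pmod d$ by a prime-by-prime analysis (the cases $p\mid z$ and $p\mid g$, using $d_p\mid d$ and the invertibility of $k-1$ modulo $p$ to compare geometric sums), whereas the paper gets the propagation for free from the order-$d$ hypothesis alone: if $j\equiv j^*\pmod d$, then $\Phi_2^{j}=\Phi_2^{j^*}$ as permutations, hence $\Phi_1^i\Phi_2^{j}=\Phi_1^i\Phi_2^{j^*}$ for every $i$, and evaluating at $1_G$ for $i=0$ and $i=1$ yields both $A_{\Phi_2}(j)\equiv A_{\Phi_2}(j^*)\pmod e$ and $\alpha(j)\equiv\alpha(j^*)\pmod e$ at once. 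Note that you already invoked exactly this permutation-equality argument for your fact (ii); applying it once more with $i=1$ would have delivered your fact (i) immediately, making the case split and the appeal to $k\not\equiv1\pmod q$ for $q\mid g$ unnecessary. Your arithmetic route is nevertheless valid and self-contained at the level of congruences, but the essential input is the periodicity $\Phi_2^d=\Id_G$ by itself, not that periodicity paired with $d_p\mid d$.
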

\begin{proof}For any $i,j\in\bN_{\geq0}$, from (\ref{ij shape}) we know that
\[ (\Phi_1^i\Phi_2^j)(1_G) = (\sigma^{(a_1+c_1zS(k,b_2j))i+A_{\Phi_2}(j)}\tau^{b_2j})^{-1}.\]
Thus, for (\ref{Phi1Phi2}) to hold, necessarily $\gcd(b_2,d)=1$. In this case, since $\Phi_2$ has order $d$, the equality (\ref{Phi1Phi2}) holds if and only if for each $j\in\bN_{\geq0}$, the set
\[ \{(a_1+c_1zS(k,b_2j))i + A_{\Phi_2}(j): i \in\bN_{\geq0}\}\]
runs over all residue classes mod $e$, which is equivalent to the coefficient of $i$ being invertible mod $e$. This proves the claim.
\end{proof}

\begin{lem}\label{normality lem}Assume that $b_2$ is coprime to $d$, that $\Phi_2$ has order $d$, and that
\begin{equation}\label{group shape}\langle\Phi_1,\Phi_2\rangle = \{\Phi_1^i \Phi_2^j:i,j\in\bN_{\geq0}\}.\end{equation}
Then, we have $\langle\Phi_1,\Phi_2\rangle$ is normal in $\Hol(G)$ precisely when the conditions
\begin{enumerate}[(1)]
\item There exists $i\in\bN_{\geq0}$ such that
\begin{align*}\hspace{5mm}
a_1(k-i)&\equiv c_1z\mmod{e},\\
c_1(i-1)&\equiv0\mmod{g}.
\end{align*}
\item There exists $i\in\bN_{\geq0}$ such that
\begin{align*}\hspace{5mm}
 (a_1+c_1zS(k,b_2))i&\equiv1-s_2k^{b_2}\mmod{e},\\
c_1i&\equiv0\mmod{g}.
\end{align*}
\item There exists $i\in\bN_{\geq0}$ such that
\begin{align*}\hspace{5mm}
(a_1+c_1zS(k,b_2))i&\equiv a_2(k-1)-c_2zk^{b_2}\mmod{e},\\
c_1i&\equiv0\mmod{g}.
\end{align*}
\item There exists $i\in\bN_{\geq0}$ such that
\begin{align*}\hspace{5mm}
 (a_1+c_1zS(k,b_2))i&\equiv zS(k,b_2)\mmod{e},\\
c_1i&\equiv 1- s_2\mmod{g}.
\end{align*}
\item For every $s\in U(e)$, there exists $i\in\bN_{\geq0}$ such that
\begin{align*}\hspace{5mm}
(a_1+c_1zS(k,b_2))i&\equiv a_2(s-1) \mmod{e},\\
c_1i&\equiv c_2(s-1)\mmod{g}.
\end{align*}
\end{enumerate}
are all satisfied. Here, the $i\in\bN_{\geq0}$ are independent of each other.
\end{lem}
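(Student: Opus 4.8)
The plan is to test normality of $N := \langle\Phi_1,\Phi_2\rangle$ one generator of $\Hol(G)$ at a time. Since $N$ is finite, any $h\in\Hol(G)$ with $hNh^{-1}\subseteq N$ in fact satisfies $hNh^{-1}=N$; hence the set of $h$ normalizing $N$ is a subgroup, and $N$ is normal in $\Hol(G)$ precisely when every element of a chosen generating set normalizes $N$. Moreover, because $N=\langle\Phi_1,\Phi_2\rangle$, conjugation by $h$ being an automorphism gives $hNh^{-1}=\langle h\Phi_1h^{-1},h\Phi_2h^{-1}\rangle$, so $h$ normalizes $N$ iff $h\Phi_1h^{-1}\in N$ and $h\Phi_2h^{-1}\in N$. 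By Lemma~\ref{Aut lem} and $\Hol(G)=\rho(G)\rtimes\Aut(G)$, the set $\{\rho(\sigma),\rho(\tau),\theta\}\cup\{\phi_s:s\in U(e)\}$ generates $\Hol(G)$, so it suffices to verify $h\Phi_1h^{-1},\,h\Phi_2h^{-1}\in N$ for $h$ running through this list.

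First I would record a membership test for $N$. By the hypothesis (\ref{group shape}) together with (\ref{ij shape}), every element of $N$ has the standard form $\rho(\sigma^{\ast}\tau^{b_2 j})\theta^{\ast}\phi_{s_2^{j}}$ with the four exponents prescribed by (\ref{ij shape}). To decide whether a given $\rho(\sigma^{a'}\tau^{b'})\theta^{c'}\phi_{s'}$ lies in $N$, I match the four coordinates: the $\tau$-exponent forces $b_2 j\equiv b'\pmod{d}$, which determines $j$ modulo $d$ since $\gcd(b_2,d)=1$; because $\Phi_2$ has order $d$ the quantities $s_2^{j}$, $S(s_2,j)$ and $A_{\Phi_2}(j)$ depend only on $j$ modulo $d$, so the $\phi$-coordinate is then automatically consistent, and the $\theta$- and $\sigma$-coordinates become two congruences to be solved for $i$, modulo $g$ and modulo $e$ respectively. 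This is exactly the ``there exists $i$'' format of conditions (1)--(5).

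Next I would carry out the conjugations of $\Phi_1$ and $\Phi_2$ by each generator, using the relations $\pi\rho(x)=\rho(\pi(x))\pi$, $\tau^b\sigma^a=\sigma^{ak^b}\tau^b$, $\phi_s\theta^c=\theta^{cs}\phi_s$ and $(\theta^c\phi_s)(\sigma^a\tau^b)=\sigma^{as+czS(k,b)}\tau^b$, together with $\theta(\sigma)=\sigma$, $\phi_s(\sigma)=\sigma^s$, $\phi_s(\tau)=\tau$. Writing $\Phi_1=\rho(\sigma^{a_1})\theta^{c_1}$ (legitimate because $\tau^{b_1}=1_G$ and $\phi_{s_1}=\Id_G$ under the standing assumptions $b_1\equiv0\pmod{d}$, $s_1\equiv1\pmod{e}$), the conjugations of $\Phi_1$ by $\rho(\sigma)$ and by $\theta$ both fix $\Phi_1$ since $\theta(\sigma)=\sigma$, while conjugation by $\phi_s$ sends $\Phi_1\mapsto\Phi_1^{s}\in N$; these impose no condition. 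The five remaining conjugates are the substantive ones, and I expect them to produce the five listed conditions in the order $\rho(\tau)\Phi_1\rho(\tau)^{-1}\to(1)$, $\rho(\sigma)\Phi_2\rho(\sigma)^{-1}\to(2)$, $\rho(\tau)\Phi_2\rho(\tau)^{-1}\to(3)$, $\theta\Phi_2\theta^{-1}\to(4)$, and $\phi_s\Phi_2\phi_s^{-1}\to(5)$; the last runs over every $s\in U(e)$, which is why (5) is quantified over $s$. For instance, a direct computation gives $\rho(\tau)\Phi_1\rho(\tau)^{-1}=\rho(\sigma^{a_1 k-c_1 z})\theta^{c_1}$, and matching this against $\Phi_1^{i}$ (the $j=0$ case of (\ref{ij shape})) yields precisely $a_1(k-i)\equiv c_1 z\pmod{e}$ and $c_1(i-1)\equiv0\pmod{g}$, i.e.\ condition (1); the other four are analogous, each time reading off the $\theta$- and $\sigma$-exponents and solving for $i$.

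The main obstacle will be purely computational bookkeeping: correctly simplifying the $\sigma$-exponents after pushing $\rho$ past automorphisms and collecting $\tau$-powers through $\tau^b\sigma^a=\sigma^{ak^b}\tau^b$, and then matching them against the intricate exponent $(a_1+c_1zS(k,b_2j))i+A_{\Phi_2}(j)$ of (\ref{ij shape}). One must also check that the forced value of $j$ (namely $0$ or $1$ in each case) is consistent across all four coordinates, and that replacing $j$ by any other representative modulo $d$ leaves the element of $N$ unchanged, which holds because $\Phi_2$ has order $d$. Once these identifications are made, the equivalence of normality with the conjunction of conditions (1)--(5) follows at once.
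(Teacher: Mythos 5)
Your proposal is correct and follows essentially the same route as the paper's proof: reduce normality (via finiteness) to checking that conjugates of $\Phi_1$ and $\Phi_2$ by the generators $\rho(\sigma),\rho(\tau),\theta,\phi_s$ lie in $\langle\Phi_1,\Phi_2\rangle$, compute those conjugates explicitly, and match the $\tau$-, $\phi$-, $\theta$- and $\sigma$-coordinates against (\ref{ij shape}), with the generator-to-condition correspondence you predict being exactly the one in the paper. The only difference is that you spell out the finiteness and uniqueness-of-decomposition points that the paper leaves implicit, which is harmless.
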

\begin{proof}By (\ref{group shape}), the subgroup $\langle\Phi_1,\Phi_2\rangle$ is normal in $\Hol(G)$ if and only if for each $\Psi$ in some set of generators of $\Hol(G)$, we have
\begin{align*}
\Psi\Phi_1\Psi^{-1} & = \Phi_1^{i_{\Psi,1}}\Phi_2^{j_{\Psi,1}}\mbox{ for some }i_{\Psi,1},j_{\Psi,1}\in\bN_{\geq0},\\
\Psi\Phi_2\Psi^{-1} & = \Phi_1^{i_{\Psi,2}}\Phi_2^{j_{\Psi,2}}\mbox{ for some }i_{\Psi,2},j_{\Psi,2}\in\bN_{\geq0}.
\end{align*}
Now, clearly $\Hol(G)$ is generated by $\rho(\sigma),\rho(\tau),\theta$, and $\phi_s$ for $s\in U(e)$. Since
\begin{align*}
\rho(\sigma)(\rho(\sigma^{a_1})\theta^{c_1})\rho(\sigma)^{-1} & = \rho(\sigma^{a_1})\theta^{c_1},\\
\theta(\rho(\sigma^{a_1})\theta^{c_1})\theta^{-1} & = \rho(\sigma^{a_1})\theta^{c_1},\\
\phi_s(\rho(\sigma^{a_1})\theta^{c_1})\phi_s^{-1} & = (\rho(\sigma^{a_1})\theta^{c_1})^s,
\end{align*}
these three equations impose no condition. Next, we compute that
\begin{align*}
\rho(\tau)(\rho(\sigma^{a_1})\theta^{c_1})\rho(\tau)^{-1} &= \rho(\tau\sigma^{a_1}\cdot(\sigma^{c_1z}\tau)^{-1})\cdot\theta^{c_1}\\&= \rho(\sigma^{a_1k-c_1z})\cdot\theta^{c_1},\end{align*}
as well as that
\begin{align*}
\rho(\sigma)(\rho(\sigma^{a_2}\tau^{b_2})\theta^{c_2}\phi_{s_2})\rho(\sigma)^{-1}
& = \rho(\sigma^{a_2+1}\tau^{b_2}\cdot(\sigma^{s_2})^{-1})\cdot\theta^{c_2}\phi_{s_2}\\
& = \rho(\sigma^{a_2+1 - s_2k^{b_2}}\tau^{b_2})\cdot\theta^{c_2}\phi_{s_2}\\
\rho(\tau)(\rho(\sigma^{a_2}\tau^{b_2})\theta^{c_2}\phi_{s_2})\rho(\tau)^{-1}
& = \rho(\sigma^{a_2k}\tau^{b_2+1}\cdot(\sigma^{c_2z}\tau)^{-1})\cdot\theta^{c_2}\phi_{s_2}\\
& = \rho(\sigma^{a_2k - c_2zk^{b_2}}\tau^{b_2})\cdot\theta^{c_2}\phi_{s_2}\\
\theta(\rho(\sigma^{a_2}\tau^{b_2})\theta^{c_2}\phi_{s_2})\theta^{-1}
& = \rho(\sigma^{a_2+zS(k,b_2)}\tau^{b_2})\cdot\theta^{c_2+1}\phi_{s_2}\theta^{-1}\\
& = \rho(\sigma^{a_2+zS(k,b_2)}\tau^{b_2})\cdot\theta^{c_2+1-s_2}\phi_{s_2}\\
\phi_s(\rho(\sigma^{a_2}\tau^{b_2})\theta^{c_2}\phi_{s_2})\phi_s^{-1}
& = \rho(\sigma^{a_2s}\tau^{b_2})\cdot\phi_s\theta^{c_2}\phi_{s_2}\phi_s^{-1}\\
& =  \rho(\sigma^{a_2s}\tau^{b_2})\cdot\theta^{c_2s}\phi_{s_2}.
\end{align*}
Since $\gcd(b_2,d)=1$, by comparing the exponents of $\tau$, we see from (\ref{ij shape}) that
\[ j_{\rho(\tau),1} \equiv0\mmod{d}\mbox{ and }j_{\rho(\sigma),2},j_{\rho(\tau),2},j_{\theta,2},j_{\phi_s,2}\equiv1\mmod{d}\]
necessarily. Since $\Phi_2$ has order $d$, we may assume that these are equalities in $\bZ$. Notice that $A_{\Phi_2}(0) = 0$ and $A_{\Phi_2}(1) = a_2$. Hence, the claim now follows by comparing the exponents of $\sigma$ and $\theta$ in the above with those in (\ref{ij shape}).
\end{proof}

Let us make a useful observation. Given a prime divisor $q$ of $g$, recall that $k\not\equiv1$ (mod $q$) and that $d_q$ denotes the multiplicative order of $k$ mod $q$, so in particular $S(k,d_q)\equiv0$ (mod $q$). For $i,i'\in\bN_{\geq0}$, we then have the relation
\begin{equation}\label{relation} k^{i}S(k,i') \equiv -S(k,i)\mmod{q}\mbox{ when }i+i'\equiv0\mmod{d_q}.\end{equation}
We are now ready to give criteria for the conditions in (\ref{conditions}) to all hold.

\begin{prop}\label{criteria prop}The five conditions in $(\ref{conditions})$ simultaneously hold precisely when the conditions
\begin{enumerate}[(a)]
\item We have $\gcd(a_1,e)=1$ and $\gcd(b_2,d)=1$.
\item We have $s_2\equiv1\pmod{z}$ and $a_2\equiv0\pmod{z}$.
\item For each prime $q\mid g$ and $q\mid c_1$, we have
\begin{align*}\hspace{5mm}
s_2&\equiv1\mmod{q},\\
b_2&\equiv1\mmod{d_q},\\
c_2&\equiv0\mmod{q}.
\end{align*}
\item For each prime $q\mid g$ and $q\nmid c_1$, we have
\begin{align*}\hspace{5mm}
s_2&\equiv k\mmod{q},\\
b_2&\equiv-1\mmod{d_q},\\
a_1(k-1)&\equiv c_1z\mmod{q},\\
a_2(k-1)&\equiv c_2zk^{b_2}\mmod{q}.
\end{align*}
\end{enumerate}
are all satisfied. 
\end{prop}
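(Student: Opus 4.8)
The plan is to translate each of the five requirements in (\ref{conditions}) into congruences by invoking, in turn, Lemmas~\ref{order e lem}, \ref{order d lem}, \ref{k lem}, \ref{regularity lem}, and~\ref{normality lem}, and then to simplify the resulting system into the four clean conditions (a)--(d). Lemma~\ref{order e lem} already lets us assume $b_1\equiv 0\pmod d$, $s_1\equiv1\pmod e$, and $\gcd(a_1,e)=1$; the complementary half $\gcd(b_2,d)=1$ of~(a) is forced by the regularity requirement via Lemma~\ref{regularity lem}, and this in turn legitimizes the hypothesis $\gcd(b_2,d)=1$ needed to apply Lemmas~\ref{order d lem} and~\ref{normality lem}. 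With these in hand, I would feed the order-$d$, conjugation, regularity, and normality conditions through the expression (\ref{ij shape}), reading off each one both modulo $e$ and, where relevant, modulo $g$.

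The organizing device is the Chinese Remainder Theorem: since $e=gz$ with $\gcd(g,z)=1$ and both squarefree, every congruence mod $e$ splits into one mod $z$ and one mod $q$ for each prime $q\mid g$, while the congruences mod $g$ speak only about the prime factors of $g$. I would dispose of the $z$-part first. Because $k\equiv1\pmod z$, one has $S(k,b_2)\equiv b_2$ and $k^{b_2}\equiv1\pmod z$, so the first relation of Lemma~\ref{k lem} collapses to $a_1(s_2-1)\equiv0\pmod z$, giving $s_2\equiv1\pmod z$ as $\gcd(a_1,z)=1$; likewise the relation $A_{\Phi_2}(d)\equiv0\pmod e$ of Lemma~\ref{order d lem} collapses mod $z$ to $a_2d\equiv0\pmod z$, giving $a_2\equiv0\pmod z$ as $\gcd(d,z)=1$. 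This is exactly~(b), and one checks that the remaining relations impose nothing further mod $z$ (the coefficient $a_1+c_1zS(k,b_2)$ reduces to $a_1$, hence is invertible there).

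For the per-prime analysis, fix a prime $q\mid g$ and note that $q$ is odd, since $k\not\equiv1\pmod q$ forces $q\nmid k$ while $q\mid e$ forces $\gcd(k,e)=1$, ruling out $q=2$. The dichotomy in (c)--(d) is produced by the second relation $c_1(s_2-k)\equiv0\pmod g$ of Lemma~\ref{k lem}, which modulo $q$ says either $q\mid c_1$ or $s_2\equiv k\pmod q$. In the branch $q\nmid c_1$ I would substitute $s_2\equiv k$ and use the relation (\ref{relation}) together with $S(k,d_q)\equiv0\pmod q$ to evaluate the partial sums; condition (1) of Lemma~\ref{normality lem} yields $a_1(k-1)\equiv c_1z\pmod q$, condition (2) yields $b_2\equiv-1\pmod{d_q}$, and condition (3) yields $a_2(k-1)\equiv c_2zk^{b_2}\pmod q$, which together are precisely~(d). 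In the branch $q\mid c_1$, condition (4) of Lemma~\ref{normality lem} forces $s_2\equiv1\pmod q$, the first relation of Lemma~\ref{k lem} then forces $b_2\equiv1\pmod{d_q}$, and condition (5), on choosing $s\in U(e)$ with $s\not\equiv1\pmod q$, forces $c_2\equiv0\pmod q$; these are precisely~(c). Finally one verifies that the relations $A_{\Phi_2}(d)\equiv0$, $c_2S(s_2,d)\equiv0$, and $s_2^d\equiv1$ of Lemma~\ref{order d lem} are automatically satisfied mod $q$ in both branches.

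The hard part will be the careful handling of the existential quantifier on $i$ in the five conditions of Lemma~\ref{normality lem}, together with the universal quantifier on $s\in U(e)$ in condition (5). The pivotal observation is that the coefficient $a_1+c_1zS(k,b_2)$ of $i$ appearing throughout is invertible mod $e$ --- this is the $j=1$ instance of the regularity condition in Lemma~\ref{regularity lem} --- so each mod-$e$ congruence pins $i$ down uniquely modulo $e$, and the accompanying mod-$g$ congruence then becomes a genuine constraint on that single residue, to be checked prime by prime. Reconciling the two for one common $i$, and ensuring the $s$-indexed family in condition (5) admits the required $i$ for every unit $s$, is where the coprimality of $g$ and $z$ and the invertibility just noted must be used with care. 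Once the $z$-part and the per-prime branches are reassembled by the Chinese Remainder Theorem, conditions (a)--(d) emerge; the converse, that (a)--(d) imply every congruence produced above, is then a direct verification using the same simplifications.
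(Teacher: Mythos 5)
Your proposal is correct and follows essentially the same route as the paper's proof: the same splitting of every congruence modulo $z$ and modulo each prime $q\mid g$, the same extraction of (a)--(d) from Lemmas~\ref{order e lem}, \ref{order d lem}, \ref{k lem}, \ref{regularity lem} and~\ref{normality lem}, and the same key point for the converse, namely that invertibility of $a_1+c_1zS(k,b_2)$ modulo $e$ determines the existential $i$ uniquely mod $e$ (hence mod $g$, since $g\mid e$), reducing each part of Lemma~\ref{normality lem} to a per-prime check. The one minor deviation is that you derive $c_2\equiv 0\pmod{q}$ for $q\mid c_1$ from part (5) of Lemma~\ref{normality lem} by choosing $s\in U(e)$ with $s\not\equiv 1\pmod{q}$ (legitimate, since such $q$ is odd), whereas the paper obtains it from Lemma~\ref{order d lem} via $c_2d\equiv c_2S(s_2,d)\equiv 0\pmod{q}$; both arguments are valid.
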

\begin{proof}First, suppose that all of the conditions in (\ref{conditions}) hold.
\begin{enumerate}[(a)]
\item By Lemmas~\ref{order e lem} and~\ref{regularity lem}, we have $\gcd(a_1,e)=1$ and $\gcd(b_2,d)=1$.
\item Since $k\equiv1$ (mod $z$), we have have $s_2\equiv1$ (mod $z$) by Lemma~\ref{k lem}. From Lemma~\ref{order d lem}, we then see that
\[\hspace{5mm}a_2d\equiv A_{\Phi_2}(d) \equiv0\mmod{z}.\]
This yields $a_2\equiv0$ (mod $z$) because $d$ is coprime to $e$ and hence to $z$.
\item For each prime $q\mid g$ and $q\mid c_1$, we have $s_2\equiv1$ (mod $q$) by Lemma~\ref{normality lem}(4). From Lemma~\ref{order d lem}, we then see that
\[\hspace{5mm}c_2d \equiv  c_2 S(s_2,d) \equiv 0\mmod{q}.\]
This yields $c_2\equiv0$ (mod $q$) because $d$ is coprime to $e$ and hence to $g$. By Lemma~\ref{k lem}, we also have 
\[ \hspace{5mm} k^{b_2-1}\equiv s_2k^{b_2-1}\equiv 1\mmod{q},\]
which implies that $b_2\equiv1$ (mod $d_q$).
\item For each prime $q\mid g$ and $q\nmid c_1$, we have $s_2\equiv k$ (mod $q$) by Lemma~\ref{k lem}. From Lemma~\ref{normality lem}(2), we then deduce that
\[\hspace{5mm} k^{b_2+1} \equiv s_2k^{b_2}\equiv1\mmod{q},\]
which implies that $b_2\equiv-1$ (mod $d_q$). The other two congruences follow directly from Lemma~\ref{normality lem}(1),(3).
\end{enumerate}
We have thus shown that (a) through (d) are all satisfied.

\vspace{1.5mm}

Conversely, suppose that (a) through (d) are all satisfied. Then, clearly $\Phi_1$ has order $e$ by Lemma~\ref{order e lem}. Hence, it remains to verify that the conditions in Lemmas~\ref{order d lem},~\ref{k lem},~\ref{regularity lem}, and~\ref{normality lem} hold. Since $e$ is squarefree, it suffices to check them mod $z$ as well as mod $q$ for each prime divisor $q$ of $g$.
\begin{enumerate}[(i)]
\item Lemma~\ref{order d lem}: It is obvious that
\[ \hspace{5mm} A_{\Phi_2}(d) \equiv 0\mmod{z}\mbox{ and } s_2^d\equiv1\mmod{e}.\]
Since $S(k,d)\equiv0$ (mod $q$), we also clearly have
\begin{align*}\hspace{5mm}
A_{\Phi_2}(d)& \equiv 0 \mmod{q}\mbox{ for }q\mid c_1,\\
c_2S(s_2,d) &\equiv0 \mmod{q} \mbox{ for both $q\mid c_1$ and $q\nmid c_1$}.\end{align*}
For $q\nmid c_1$, using (\ref{relation}), for any $j\in\bN_{\geq0}$ we compute that
\begin{align*}\hspace{5mm}
A_{\Phi_2}(j) & \equiv a_2S(k^{b_2+1},j)+ a_2k(k-1)S(k,b_2)\sum_{\ell=1}^{j-1}S(k,\ell)k^{b_2\ell}\mmod{q}\\
& \equiv a_2S(1,j) - a_2S(k,1)\sum_{\ell=1}^{j-1}(k^\ell-1)k^{b_2\ell}\mmod{q}\\
& \equiv a_2j - a_2\sum_{\ell=1}^{j-1}(1-k^{b_2\ell})\mmod{q}\\
& \equiv a_2j - a_2((j-1) - (S(k^{b_2},j)-1))\mmod{q}\\
& \equiv a_2S(k^{b_2},j) \mmod{q}.
\end{align*}
Since $b_2$ is coprime to $d$, we have $k^{b_2}\not\equiv1$ (mod $d_q$), and the above yields
\[\hspace{5mm} A_{\Phi_2}(d)\equiv a_2S(k^{b_2},d)\equiv 0\mmod{q}.\]
We have thus shown that $\Phi_2$ has order $d$.
\item Lemma~\ref{k lem}: The conditions there clearly hold so $\Phi_2\Phi_1\Phi_2^{-1}=\Phi_1^k$.
\item Lemma~\ref{regularity lem}: For any $j\in\bN_{\geq0}$, observe that
\[\hspace{5mm}
a_1 + c_1zS(k,b_2j) \equiv \begin{cases}
a_1 & \mbox{(mod $z$), (mod $q$) for $q\mid c_1$},\\
a_1k^{b_2j} &\mbox{(mod $q$) for $q\nmid c_1$}.
\end{cases}\]
Since $a_1$ is coprime to $e$, it follows that the above expression on the left is coprime to $e$. We have thus shown that $\langle\Phi_1,\Phi_2\rangle$ is regular.
\item Lemma~\ref{normality lem}: By (a) and (iii) above, there exist $\widetilde{a_1},\widetilde{u}\in\bZ$ such that
\[\hspace{5mm} a_1\widetilde{a_1} \equiv1\mmod{e}\mbox{ and }(a_1+c_1zS(k,b_2))\widetilde{u}\equiv1\mmod{e}.\]
To exhibit the existence of $i\in \bN_{\geq0}$ claimed in Lemma~\ref{normality lem}, simply take
\[\hspace{5mm} i \equiv \begin{cases}
\widetilde{a_1}(a_1k-c_1z) &\mbox{(mod $e$) in part (1)},\\
\widetilde{u}(1-s_2k^{b_2}) &\mbox{(mod $e$) in part (2)},\\
\widetilde{u}(a_2(k-1) - c_2zk^{b_2})&\mbox{(mod $e$) in part (3)},\\
\widetilde{u}zS(k,b_2)&\mbox{(mod $e$) in part (4)},\\
\widetilde{u}a_2(s-1)&\mbox{(mod $e$) in part (5)}.
\end{cases}\]
In each part of Lemma~\ref{normality lem}, the first congruence then clearly holds, and for $q\mid c_1$, the second congruence also holds. For $q\nmid c_1$, observe that
\begin{align*}\hspace{5mm}
\widetilde{a_1}(a_1k-c_1z)& \equiv k - (k-1) \equiv 1\mmod{q},\\
\widetilde{u}(1-s_2k^{b_2})& \equiv \widetilde{u}(1-k^{b_2+1}) \equiv 0\mmod{q},\\
\widetilde{u}(a_2(k-1) - c_2zk^{b_2}) & \equiv \widetilde{u}(c_2zk^{b_2} - c_2zk^{b_2}) \equiv 0\mmod{q},\end{align*}
and so the second congruence holds in parts (1), (2), and (3). Also, note that by (iii) above, we have $\widetilde{u}\equiv \widetilde{a_1}k$ (mod $q$). We then compute that
\begin{align*}
c_1(\widetilde{u}zS(k,b_2)) & \equiv k(k-1)S(k,b_2) \mmod{q}\\
& \equiv k(k^{b_2}-1) \mmod{q}\\
& \equiv k^{b_2+1}-k \mmod{q}\\
& \equiv 1-s_2 \mmod{q},
\end{align*}
which verifies the second congruence in part (4), and similarly that
\begin{align*}
z\cdot c_1(\widetilde{u}a_2(s-1))& \equiv a_2k(k-1) (s-1)\mmod{q}\\
&\equiv z\cdot c_2(s-1)\mmod{q},
\end{align*}
which verifies the second congruence in part (5) since $z$ is coprime to $g$. We have thus shown that $\langle\Phi_1,\Phi_2\rangle$ is normal in $\Hol(G)$.
\end{enumerate}
This completes the proof of the proposition.
\end{proof}

In order to compute the actual group structure of $T(G)$, we shall also need to understand the map $\xi_{\langle\Phi_1,\Phi_2\rangle}$ defined in (\ref{xi}).

\begin{prop}\label{xi prop}Assume that the conditions $(a)$ to $(d)$ in Proposition~$\ref{criteria prop}$ are all satisfied. Then, for any $i,j\in\bN_{\geq0}$, we have
\[ (\Phi_1^i\Phi_2^j)(1_G) = \sigma^{-\fa(i,j)}\tau^{-b_2j},\]
where we define
\begin{equation}\label{fa} \fa(i,j) = \widetilde{k}^{b_2j}((a_1+c_1zS(k,b_2j))i + A_{\Phi_2}(j)),\end{equation}
and $\widetilde{k}\in\bZ$ is such that $k\widetilde{k}\equiv1\pmod{e}$. Moreover, we have
\[ \fa(i,j) \equiv \begin{cases}
a_1i&\hspace{-2mm}\mmod{z},\\
\widetilde{k}^{j}(a_1i + a_2S(k,j))&\hspace{-2mm}\mmod{q}\mbox{ for a prime }q\mid g\mbox{ and }q\mid c_1,\\
a_1i + a_2kS(k,j)&\hspace{-2mm}\mmod{q}\mbox{ for a prime }q\mid g\mbox{ and }q\nmid c_1.\end{cases}\]
\end{prop}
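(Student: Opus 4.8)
The plan is to read off both assertions from the normal form (\ref{ij shape}) for $\Phi_1^i\Phi_2^j$, combined with the congruence data in conditions (a)--(d) of Proposition~\ref{criteria prop}.

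For the first formula I would abbreviate the exponent of $\sigma$ in (\ref{ij shape}) as $M = (a_1+c_1zS(k,b_2j))i + A_{\Phi_2}(j)$, so that
\[ \Phi_1^i\Phi_2^j = \rho(\sigma^{M}\tau^{b_2j})\,\theta^{c_1i+c_2S(s_2,j)}\phi_{s_2^j}. \]
Evaluating at $1_G$, the automorphism factor fixes $1_G$ while $\rho(\sigma^M\tau^{b_2j})$ sends $1_G$ to $(\sigma^M\tau^{b_2j})^{-1}$, so the whole task reduces to rewriting this inverse in the standard form $\sigma^\ast\tau^\ast$. Using $(\sigma^M\tau^{b_2j})^{-1} = \tau^{-b_2j}\sigma^{-M}$ together with the relation $\tau^b\sigma^a = \sigma^{ak^b}\tau^b$ and $k^{-1}\equiv\widetilde{k}\pmod{e}$, I would get $(\sigma^M\tau^{b_2j})^{-1} = \sigma^{-\widetilde{k}^{b_2j}M}\tau^{-b_2j}$, which is exactly $\sigma^{-\fa(i,j)}\tau^{-b_2j}$ with $\fa(i,j)=\widetilde{k}^{b_2j}M$ as in (\ref{fa}).

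For the congruences, since $e$ is squarefree I would reduce $\fa(i,j)$ modulo $z$ and modulo each prime $q\mid g$ in turn. Modulo $z$ the term $c_1zS(k,b_2j)$ vanishes and $\widetilde{k}^{b_2j}\equiv1$ (as $k\equiv1$), while $s_2\equiv1$ and $a_2\equiv0\pmod{z}$ from (b) give $A_{\Phi_2}(j)\equiv0$, leaving $\fa(i,j)\equiv a_1i$. For a prime $q\mid g$ with $q\mid c_1$, again $c_1zS(k,b_2j)\equiv0$, the $c_2$-summand of $A_{\Phi_2}(j)$ vanishes by $c_2\equiv0\pmod{q}$ from (c), and $s_2\equiv1$, $b_2\equiv1\pmod{d_q}$ yield $s_2k^{b_2}\equiv k$ and hence $A_{\Phi_2}(j)\equiv a_2S(k,j)$; combined with $\widetilde{k}^{b_2j}\equiv\widetilde{k}^{j}$ this produces the middle case.

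The main obstacle is the last case $q\mid g$ with $q\nmid c_1$, for which I would reuse the computation $A_{\Phi_2}(j)\equiv a_2S(k^{b_2},j)\pmod{q}$ already performed in the proof of Proposition~\ref{criteria prop}. Two elementary identities drive everything: the telescoping $(k-1)S(k,m)=k^m-1$ and the reindexing $k^{j-1}S(\widetilde{k},j)\equiv S(k,j)\pmod{q}$. Applying the first to $a_1(k-1)\equiv c_1z\pmod{q}$ from (d) gives $a_1+c_1zS(k,b_2j)\equiv a_1k^{b_2j}$, so after multiplying by $\widetilde{k}^{b_2j}$ the coefficient of $i$ collapses to $a_1$ since $\widetilde{k}^{b_2j}k^{b_2j}\equiv1$. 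For the remaining term, $b_2\equiv-1\pmod{d_q}$ gives $k^{b_2}\equiv\widetilde{k}$ and $\widetilde{k}^{b_2j}\equiv k^{j}$, so that $\widetilde{k}^{b_2j}a_2S(k^{b_2},j)\equiv k^{j}a_2S(\widetilde{k},j)\equiv a_2kS(k,j)$ by the reindexing identity. Summing yields $\fa(i,j)\equiv a_1i+a_2kS(k,j)\pmod{q}$, which completes the last case. The only real care needed throughout is bookkeeping of the exponents of $k$ and $\widetilde{k}$ modulo $d_q$.
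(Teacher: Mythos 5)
Your proposal is correct and follows essentially the same route as the paper: the first formula is read off from the normal form (\ref{ij shape}) by evaluating at $1_G$ and commuting $\sigma^{-M}$ past $\tau^{-b_2j}$, and the congruences are obtained by reducing $\fa(i,j)$ modulo $z$ and modulo each prime $q\mid g$, reusing the facts $a_1+c_1zS(k,b_2j)\equiv a_1k^{b_2j}$ and $A_{\Phi_2}(j)\equiv a_2S(k^{b_2},j)\pmod{q}$ for $q\nmid c_1$ established in the proof of Proposition~\ref{criteria prop}, together with $k^{b_2}\equiv\widetilde{k}\pmod{q}$ and the reindexing identity $k^{j}S(\widetilde{k},j)\equiv kS(k,j)$. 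The only cosmetic difference is that you re-derive the coefficient identity from condition (d) via telescoping rather than citing step (iii) of that proof, which is the same computation.
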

\begin{proof}The first claim follows from (\ref{ij shape}). The congruence mod $z$ is also clear since $k\equiv1$ (mod $z$) and $A_{\Phi_2}(j)\equiv0$ (mod $z$). For a prime $q\mid g$, recall that
\[ b_2\equiv \begin{cases}1&\hspace{-2mm}\mmod{d_q}\mbox{ for }q\mid c_1,\\
-1&\hspace{-2mm}\mmod{d_q}\mbox{ for }q\nmid c_1.\end{cases}\]
For $q\mid c_1$, we have $A_{\Phi_2}(j)\equiv a_2S(k,j)$ (mod $q$), and so the claim is clear. As for $q\nmid c_1$, observe that
\begin{align*}a_1+c_1zS(k,b_2j) &\equiv a_1k^{b_2j} \mmod{q}\\ A_{\Phi_2}(j)&\equiv a_2S(k^{b_2},j)\mmod{q}
\end{align*}
by (iii) and (i) in the proof of Proposition~\ref{criteria prop}. Since $k^{b_2}\equiv \widetilde{k}$ (mod $q$) in this case, we deduce that
\begin{align*}
\fa(i,j) & \equiv k^j(a_1\widetilde{k}^{j}i + a_2S(\widetilde{k},j))\mmod{q}\\
& \equiv a_1i + a_2k^jS(\widetilde{k},j)\mmod{q}\\
& \equiv a_1i + a_2kS(k,j)\mmod{q},
\end{align*}
as claimed. This proves the proposition.
\end{proof}

\subsection{Proof of Theorem~\ref{main thm}} Let us first make a definition.

\begin{definition}Given a tuple $\fm=(a_1,c_1,a_2,b_2,c_2,s_2)$ of integers, we shall say that $\fm$ is \emph{admissible} if the conditions (a) to (d) in Proposition~\ref{criteria prop} are all satisfied. The admissibility of $\fm$ depends only on its class mod $\mathbb{M}$, where 
\[\mathbb{M} = e\bZ\times g\bZ\times e\bZ\times d\bZ\times g\bZ\times e\bZ.\]
In the case that $\fm$ is admissible, define
\[ \Phi_{\fm,1} = \rho(\sigma^{a_1})\theta^{c_1}\mbox{ and }
\Phi_{\fm,2} = \rho(\sigma^{a_2}\tau^{b_2})\theta^{c_2}\phi_{s_2},\]
which satisfy (\ref{conditions}) by Proposition~\ref{criteria prop}. Further, define
\[ \pi_\fm\in\Perm(G);\hspace{1em} \pi_\fm(\sigma^i\tau^j) = \phi_{-1}(\Phi_{\fm,1}^i\Phi_{\fm,2}^j(1_G)).\]
An explicit description of $\pi_\fm$ was given in Proposition~\ref{xi prop} and the $\phi_{-1}$ above is only to remove the negative sign in the exponent of $\sigma$ for convenience. 
\end{definition}

Recall that $T(G)$ has the same size as $\HH_0(G)$. By (\ref{HH0}), we plainly have
\begin{align*}
\#\HH_0(G) & = \frac{1}{|\Aut(G)|}\cdot\#\left\{\beta\in\mbox{Hom}(G,\Hol(G)):\begin{array}{c}
\beta(G)\mbox{ is regular and}\\\mbox{is normal in }\Hol(G)
\end{array}\right\}\\
& = \frac{1}{|\Aut(G)|}\cdot\#\{(\Phi_1,\Phi_2)\in\Hol(G)^2:\mbox{the conditions in (\ref{conditions}) hold}\}.
\end{align*}
From Lemma~\ref{Aut lem} and Proposition~\ref{criteria prop}, we then deduce that
\[ |T(G)| = \frac{1}{g\varphi(e)}\cdot\#\{\mbox{admissible tuples $(a_1,c_1,a_2,b_2,c_2,s_2)$ mod $\mathbb{M}$}\}.\]
We shall now compute this number.

\begin{proof}[Proof for the order] Consider a pair $(f_1,f_2)\in\bN^2$ with $g=f_1f_2$. Below, let $q$ be an arbitrary prime divisor of $g$, and we shall count the admissible tuples 
\[\fm = (a_1,c_1,a_2,b_2,c_2,s_2)\mod{\mathbb{M}} \]
under the restriction that 
\begin{equation}\label{c1}q\mid c_1\mbox{ if and only if }q\mid f_1.\end{equation}
Since $g$ is squarefree, this is equivalent to
\[ q\nmid c_1 \mbox{ if and only if }q\mid f_2.\]
Since $d$ is squarefree and $d = \lcm_{q\mid g}d_q$, there exists $b_2\in\bZ$ satisfying
\[ b_2\equiv\begin{cases}
1&\hspace{-2mm}\mmod{d_q}\mbox{ for all $q\mid f_1$}\\
-1&\hspace{-2mm}\mmod{d_q}\mbox{ for all $q\mid f_2$}
\end{cases}\]
exactly when $(f_1,f_2)\in\mathfrak{o}(G)$. Hence, for $(f_1,f_2)\notin\mathfrak{o}(G)$, we get no admissible tuple satisfying (\ref{c1}). So let us assume that $(f_1,f_2)\in\mathfrak{o}(G)$. Once we impose (\ref{c1}), in order for $\fm$ to satisfy the conditions in Proposition~\ref{criteria prop}, the classes $s_2$ mod $e$ and  $b_2$ mod $d$ are uniquely determined. Note that mod $z$, we have
\begin{itemize}
\item $\varphi(z)$ choices of $a_1$,
\item $1$ choice of $a_2$.
\end{itemize}
Similarly, mod $q$ for $q\mid f_1$, we have
\begin{itemize}
\item $1$ choice of $c_1$ and $c_2$,
\item $\varphi(q)$ choices of $a_1$,
\item $q$ choices of $a_2$.
\end{itemize}
Finally, mod $q$ for $q\mid f_2$, we have
\begin{itemize}
\item $\varphi(q)$ choices of $c_1$ and $c_1$ determines $a_1$ uniquely,
\item $q$ choices of $c_2$ and $c_2$ determines $a_2$ uniquely,
\end{itemize}
because $k\not\equiv1$ (mod $q$). It follows that mod $\mathbb{M}$, we have 
\[ \varphi(z)\prod_{q\mid f_1}q\varphi(q)\prod_{q\mid f_2}q\varphi(q) = g\varphi(e)\]
admissible tuples satisfying (\ref{c1}). Therefore, in total, we have $\#\mathfrak{o}(G)\cdot g\varphi(e)$ admissible tuples without any restriction, so indeed $|T(G)| = \#\mathfrak{o}(G)$.\end{proof}

A proof of (\ref{HH0'}) was given in \cite[Section 2]{Tsang NHol} and the key is that isomorphic regular subgroups of $\Perm(G)$ are always conjugates. In fact, by the proof of \cite[Lemma 2.1]{Tsang NHol}, if $N_1$ and $N_2$ are isomorphic regular subgroups of $\Perm(G)$, via the isomorphism $\psi:N_1\longrightarrow N_2$ say, then
\[ N_2 = \pi N_1 \pi^{-1} \mbox{ for }\pi = \xi_{N_2}\circ\psi\circ \xi_{N_1}^{-1}.\]
For any admissible tuple $\fm=(a_1,c_1,a_2,b_2,c_2,s_2)$, by taking
\[ \psi_\fm: \lambda(G)\longrightarrow\langle\Phi_{\fm,1},\Phi_{\fm,2}\rangle;\hspace{1em}\begin{cases}\psi_{\fm}(\lambda(\sigma)) = \Phi_{\fm,1}\\\psi_{\fm}(\lambda(\tau)) = \Phi_{\fm,2}\end{cases} \]
to be the natural isomorphism, we see that
\[\langle\Phi_{\fm,1},\Phi_{\fm,2}\rangle = \widetilde{\pi_\fm}\lambda(G)\widetilde{\pi_\fm}^{-1} \mbox{ for }\widetilde{\pi_\fm}(\sigma^i\tau^j) = (\Phi_{\fm,1}^i\Phi_{\fm,2}^j)(1_G).\]
Note that $\pi_\fm = \phi_{-1}\circ\widetilde{\pi_\fm}$ and $\phi_{-1}\in\Aut(G)$. By (\ref{HH0'}), we then have
\[ T(G) = \{\pi_\fm \Hol(G): \fm\mbox{ is an admissible tuple}\}.\]
We shall now compute the structure of $T(G)$.

\begin{proof}[Proof for the structure] It suffices to show that the exponent of $T(G)$ divides two because  any group of exponent two is necessarily abelian. Let
\[ \fm = (a_1,c_1,a_2,b_2,c_2,s_2)\]
be an admissible tuple and we need to show that $\pi_\fm^2\in\Hol(G)$. Note that for any $i,j\in\bN_{\geq0}$, by Proposition~\ref{xi prop}, we have
\[ \pi_\fm(\sigma^i\tau^j) = \sigma^{\fa_\fm(i,j)}\tau^{-b_2j}\mbox{ and so }\pi_\fm^2(\sigma^i\tau^j) = \sigma^{\fv_{\fm}(i,j)}\tau^{j},\]
where the exponent of $\tau$ is simply $j$ because $b_2^2\equiv1$ (mod $d$) by admissibility. Here $\fa_\fm(i,j)$ is as in (\ref{fa}) and $ \fv_{\fm}(i,j)$ may be computed explicitly as follows. 

\vspace{1.5mm}

First, it is clear that we have
\[ \fv_{\fm}(i,j) \equiv a_1(a_1i) \equiv a_1^2i\mmod{z}. \]
Next, let $q$ denote an arbitrary prime divisor of $g$. Recall that
 \[ b_2\equiv\begin{cases}
1&\hspace{-2mm}\mmod{d_q}\mbox{ for $q\mid c_1$}\\
-1&\hspace{-2mm}\mmod{d_q}\mbox{ for $q\nmid c_1$}
\end{cases}\mbox{ and }
s_2\equiv\begin{cases}
1&\hspace{-2mm}\mmod{d_q}\mbox{ for $q\mid c_1$}\\
k&\hspace{-2mm}\mmod{d_q}\mbox{ for $q\nmid c_1$}
\end{cases}\]
by admissibility. Without loss of generality, let us assume that $0\leq j \leq d-1$. 
\begin{enumerate}[$\bullet$]
\item $q\mid c_1:$ Using Proposition~\ref{xi prop} and (\ref{relation}), we compute that
\begin{align*}
\fv_{\fm}(i,j) &\equiv \widetilde{k}^{d-j}(a_1(\widetilde{k}^j(a_1i+a_2S(k,j))) + a_2S(k,d-j))\mmod{q}\\
&\equiv a_1^2i + a_1a_2S(k,j) + a_2k^jS(k,d-j)\mmod{q}\\
&\equiv a_1^2i + (a_1 - 1)a_2S(k,j)\mmod{q}.
\end{align*}
\item $q\nmid c_1:$ Using Proposition~\ref{xi prop}, we compute that
\begin{align*}
\fv_{\fm}(i,j) & \equiv a_1(a_1i+a_2kS(k,j)) + a_2kS(k,j)\mmod{q}\\
& \equiv a_1^2i + (a_1+1)a_2kS(k,j)\mmod{q}.
\end{align*}
\end{enumerate}
Since $e=gz$ is squarefree, we then deduce that
\[ \fv_{\fm}(i,j) \equiv a_1^2i + z\widetilde{z}(a_1 - u)a_2s_2S(k,j) \mmod{e},\]
where $\widetilde{z},u\in\bZ$ are such that
\[ z\widetilde{z}\equiv1\mmod{g}\mbox{ and }u \equiv \begin{cases}
1 &\hspace{-2mm}\mmod{q}\mbox{ for $q\mid c_1$},\\ -1 &\hspace{-2mm}\mmod{q}\mbox{ for $q\nmid c_1$}.
\end{cases}\]
 It follows that
\[\pi_{\fm}^2(\sigma^i\tau^j)  = \sigma^{a_1^2i+\widetilde{z}(a_1-u)a_2s_2zS(k,j)}\tau^{j} = (\theta^{\widetilde{z}(a_1-u)a_2s_2}\phi_{a_1^2})(\sigma^i\tau^j).\]
This shows that $\pi_\fm^2\in\Aut(G)$ so $T(G)$ is indeed elementary $2$-abelian.
\end{proof}

\section{Groups of odd prime power order}\label{section2}

Throughout this section, let $G$ denote a finite group of order a power of an odd prime $p$, and of nilpotency class $n_c$. Write $Z(G)$ for its center and let
\[ \gamma_1(G) \geq \gamma_2(G)\geq \gamma_3(G)\geq\cdots\]
be its lower central series, namely
\[ \gamma_1(G) = G\mbox{ and }\gamma_i(G) = [G,\gamma_{i-1}(G)]\mbox{ for }i\geq 2.\]
Note that $\gamma_i(G) = 1$ for $i\geq n_c+1$. Write
\[ \exp(G/Z(G)) = p^r\mbox{ and }\exp(\gamma_3(G)) = p^t,\mbox{ where }r,t\in\bN_{\geq0}.\]
For convenience, let us assume that $r\geq 1$, namely $n_c\geq2$, for otherwise $G$ is abelian and the structure of $T(G)$ is known. Consider the subgroup
\[ T_0(G) = \{n+p^r\bZ : p\nmid n\mbox{ and } n \equiv 1\mbox{ (mod $p^t$)}\}\]
of the cyclic multiplicative group $(\bZ/p^r\bZ)^\times$. Note that
\[ |T_0(G)| = \begin{cases} (p-1)p^{r-1}&\mbox{if $t=0$, that is $n_c=2$},\\
\max\{1,p^{r-t}\}&\mbox{if $t\geq1$, that is $n_c\geq3$}.
\end{cases}\]
Further, for each $n\in\bZ$, define
\[ \pi_n:G\longrightarrow G;\hspace{1em}\pi_n(x) = x^n\]
to be the $n$th power map on $G$. We shall prove:

\begin{thm}\label{main thm'}Assume that $2\leq n_c\leq p-1$. Then, the map
\[ \kappa:T_0(G)\longrightarrow T(G);\hspace{1em}\kappa(n+p^r\bZ) = \pi_n\Hol(G)\]
is a well-defined injective homomorphism. 
\end{thm}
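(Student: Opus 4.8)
The plan is to verify the three assertions of the theorem in turn: that $\kappa$ is well-defined, that it is a homomorphism, and that it is injective. The central object is the power map $\pi_n : x \mapsto x^n$, so the first order of business is to understand when $\pi_n$ is a bijection and to record its behavior relative to the group structure. Since $G$ is a $p$-group, $\pi_n$ is a bijection precisely when $p \nmid n$, which is built into the definition of $T_0(G)$. The hypothesis $n_c \leq p-1$ is what makes power maps tractable: under this nilpotency-class restriction one expects a ``Hall--Petrescu''-type expansion, so that for $x,y \in G$ one has $(xy)^n \equiv x^n y^n \pmod{\gamma_2(G)}$ refined by correction terms lying in higher terms of the lower central series, with the binomial-coefficient factors $\binom{n}{i}$ for $2 \leq i \leq n_c$ being $p$-integral exactly because $n_c \leq p-1$. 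The first step is therefore to establish such an expansion and to read off from it how $\pi_n \lambda(\sigma) \pi_n^{-1}$ decomposes inside $\Hol(G) = \rho(G)\rtimes\Aut(G)$.

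Next I would show $\kappa$ is well-defined, i.e.\ that for $n \in T_0(G)$ the permutation $\pi_n$ lies in $\NHol(G)$, and that its coset $\pi_n\Hol(G)$ depends only on $n \bmod p^r$. To see $\pi_n \in \NHol(G)$ I would check directly that conjugation by $\pi_n$ normalizes $\Hol(G)$; the natural route is to verify it normalizes the two generating pieces $\rho(G)$ and $\Aut(G)$. Conjugation of an automorphism by $\pi_n$ should again be an automorphism (as $\pi_n$ commutes suitably with the group operation up to central corrections), and conjugation of $\rho(\sigma)$ should land back in $\Hol(G)$ by the power-map expansion from Step~1. For the coset being determined by $n \bmod p^r$, the key point is that the condition $n \equiv 1 \pmod{p^t}$ together with $\exp(\gamma_3(G))=p^t$ forces the higher correction terms to vanish, so that replacing $n$ by $n + p^r m$ alters $\pi_n$ only by an element of $\Hol(G)$ (in fact by an automorphism, since $x^{p^r} \in Z(G)$ as $\exp(G/Z(G))=p^r$). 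I would make this precise by computing $\pi_{n+p^r m}\,\pi_n^{-1}$ explicitly and exhibiting it as an automorphism composed with an inner/right-translation piece.

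For the homomorphism property I would compute $\pi_m \circ \pi_n$ and compare it with $\pi_{mn}$. These are literally equal as maps ($x^{(mn)} = (x^n)^m$ need not hold in a nonabelian group, so care is needed here) — more precisely I expect $\pi_m \circ \pi_n$ and $\pi_{mn}$ to agree only modulo $\Hol(G)$, and the class-$(p-1)$ expansion is again what guarantees the discrepancy lies in $\Hol(G)$. So the homomorphism property reduces to checking $\pi_m \pi_n \,\pi_{mn}^{-1} \in \Hol(G)$ via the same expansion, together with the observation that the group operation in $T_0(G)$ is multiplication mod $p^r$. Finally, for injectivity I must show that if $\pi_n \in \Hol(G)$ then $n \equiv 1 \pmod{p^r}$. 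Writing $\pi_n = \rho(g)\alpha$ for some $g \in G$, $\alpha \in \Aut(G)$ and evaluating at $1_G$ forces $g = 1_G$, so $\pi_n = \alpha$ is an automorphism; the fixed-point and power-map structure then pins down $\alpha$ and yields $n \equiv 1 \pmod{p^r}$ using $\exp(G/Z(G)) = p^r$.

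The main obstacle I anticipate is the power-map expansion in Step~1 and keeping precise track of the higher commutator correction terms: the whole theorem hinges on the interplay between the arithmetic conditions defining $T_0(G)$ (namely $n \equiv 1 \bmod p^t$ and $n \bmod p^r$) and the vanishing of those terms, and this is exactly where the class bound $n_c \leq p-1$ enters to keep the relevant binomial coefficients $p$-integral. Once a clean expansion of $(xy)^n$ is in hand, the well-definedness, homomorphism, and injectivity claims should all follow by the same bookkeeping, so I would invest the bulk of the effort in stating and proving that expansion cleanly.
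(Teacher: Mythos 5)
Your plan follows essentially the same route as the paper: use the fact that $\pi_n$ commutes with automorphisms to reduce $\pi_n\in\NHol(G)$ to the condition $\pi_n\circ\rho(G)\circ\pi_n^{-1}\leq\Hol(G)$ (the paper's (\ref{pi n}) and Lemma~\ref{zeta lem}), collapse the Hall--Petresco expansion to $x^ny^n=(xy)^n[x,y]^{n\choose 2}$ using $n_c\leq p-1$ and $n\equiv1\pmod{p^t}$ (Lemmas~\ref{Hall lem} and~\ref{formula lem}), and then characterize $\pi_n\in\Hol(G)$, which by evaluation at $1_G$ is the same as $\pi_n\in\Aut(G)$, as the congruence $n\equiv1\pmod{p^r}$; this last equivalence gives well-definedness and injectivity at once. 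Two corrections, one minor and one substantive. The minor one: your worry that $(x^n)^m$ may differ from $x^{mn}$ in a nonabelian group is unfounded, since powers of a single element commute; hence $\pi_m\circ\pi_n=\pi_{mn}$ holds exactly in any group, and (given well-definedness) the homomorphism property of $\kappa$ is immediate, with no expansion needed. Likewise, $\pi_n$ commutes with every automorphism exactly, because $\alpha(x^n)=\alpha(x)^n$, not merely ``up to central corrections.''

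The substantive gap is your closing claim that once the expansion of $(xy)^n$ is in hand, the rest is ``bookkeeping.'' It is not: the paper needs a second ingredient, Lemma~\ref{extra lem}, valid because $n_c\leq p-1$ forces $G$ to be a regular $p$-group, asserting that for $N$ divisible by $p^t$ one has $[x,y]^N\in Z(G)$, $[\sigma,xy]^N=[\sigma,x]^N[\sigma,y]^N$, and $[x,y]^N=[x,y^N]$. These identities enter at exactly the two places your sketch glosses over. First, to show that the conjugate $\rho(\sigma^n)^{-1}\circ\pi_n\circ\rho(\sigma)\circ\pi_n^{-1}$, i.e.\ $x\mapsto x[\sigma^{-1},x^{\widetilde{n}}]^{n\choose 2}$, is a homomorphism, one must distribute an $N$-th power of a commutator over a product in its second argument; the expansion controls powers of products, not commutators of products. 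Second, and more critically, for injectivity (and equally for your claim that $n\mapsto n+p^rm$ changes $\pi_n$ by an automorphism): from $\pi_n\in\Aut(G)$ the expansion only yields $[x,y]^{n\choose 2}=1_G$ for all $x,y$, and to conclude that $\exp(G/Z(G))=p^r$ divides ${n\choose 2}$ you must pass to $y^{n\choose 2}\in Z(G)$, which is precisely the identity $[x,y]^N=[x,y^N]$ of Lemma~\ref{extra lem}(c); this rests on Hall's theorem for regular $p$-groups and is not a formal consequence of the expansion. The same issue undercuts your parenthetical ``in fact by an automorphism, since $x^{p^r}\in Z(G)$'': centrality of $p^r$-th powers does not by itself give $[x,y]^{p^r}=1_G$ in a general $p$-group. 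A correct execution of your plan must therefore isolate and prove these commutator identities alongside the expansion, as the paper does; Example~\ref{regular eg} shows how delicate this circle of facts is, since $\kappa$ already fails to be well-defined for a regular $3$-group of class $3$.
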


Let us remark that Theorem~\ref{main thm'} is motivated by \cite[Proposition 3.1]{Caranti3}. The latter treats the special case when $n_c=2$.

\vspace{1.5mm}

To prove Theorem~\ref{main thm'}, rather than (\ref{HH0'}) and (\ref{HH0}), we shall use the definition (\ref{NHol def}) directly. For each $n\in\bZ$ coprime to $p$, clearly $\pi_n$ is a bijection that centralizes $\Aut(G)$, whence we have
\begin{equation}\label{pi n}
\pi_n\in\NHol(G) \iff \pi_n\circ\rho(G)\circ\pi_n^{-1}\leq \Hol(G).
\end{equation}
For each $\sigma\in G$, let us further define
\[\zeta_{n,\sigma}\in\Perm(G);\hspace{1em}\zeta_{n,\sigma}= \rho(\sigma^{n})^{-1}\circ\pi_n\circ\rho(\sigma)\circ\pi_n^{-1}.\]
Explicitly, for any $x\in G$, we have
\begin{equation}\label{zeta}\zeta_{n,\sigma}(x) =  (x^{\widetilde{n}}\sigma^{-1})^n\sigma^{n},\end{equation}
where $\widetilde{n}\in\bZ$ is such that $n\widetilde{n}\equiv1$ (mod $\exp(G)$).

\begin{lem}\label{zeta lem}Let $n\in\bZ$ be coprime to $p$ and let $\sigma\in G$. Then, we have
\[ \pi_n\circ\rho(\sigma)\circ\pi_n^{-1}\in\Hol(G)\iff \zeta_{n,\sigma}\in\Aut(G).\]
\end{lem}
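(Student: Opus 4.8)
The plan is to prove the equivalence in Lemma~\ref{zeta lem} by unpacking the definition of $\zeta_{n,\sigma}$ and showing that the condition $\pi_n\circ\rho(\sigma)\circ\pi_n^{-1}\in\Hol(G)$ forces the ``translation part'' of this conjugate to be $\rho(\sigma^n)$, leaving $\zeta_{n,\sigma}$ as the residual ``automorphism part.'' Recall that $\Hol(G)=\rho(G)\rtimes\Aut(G)$, so every element of $\Hol(G)$ factors uniquely as $\rho(\tau)\circ\alpha$ with $\tau\in G$ and $\alpha\in\Aut(G)$. The element $\pi_n\circ\rho(\sigma)\circ\pi_n^{-1}$ is a bijection of $G$ (being a conjugate of a bijection by the bijection $\pi_n$, which is invertible since $\gcd(n,p)=1$), so the real content is whether it lies in the subgroup $\Hol(G)$ of $\Perm(G)$.

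First I would observe that, by definition, $\zeta_{n,\sigma}=\rho(\sigma^n)^{-1}\circ\bigl(\pi_n\circ\rho(\sigma)\circ\pi_n^{-1}\bigr)$, so that
\[ \pi_n\circ\rho(\sigma)\circ\pi_n^{-1}=\rho(\sigma^n)\circ\zeta_{n,\sigma}. \]
This is the key algebraic identity: it exhibits the conjugate as $\rho(\sigma^n)$ composed with $\zeta_{n,\sigma}$. For the backward direction ($\Leftarrow$), if $\zeta_{n,\sigma}\in\Aut(G)$, then the right-hand side is a product of $\rho(\sigma^n)\in\rho(G)$ with an automorphism, hence lies in $\Hol(G)$ by definition; this direction is immediate.

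For the forward direction ($\Rightarrow$), I would use the uniqueness of the semidirect-product factorization. Suppose $\pi_n\circ\rho(\sigma)\circ\pi_n^{-1}=\rho(\tau)\circ\alpha$ for some $\tau\in G$ and $\alpha\in\Aut(G)$. The plan is to evaluate both sides at the identity $1_G$ to pin down $\tau$. Since $\alpha(1_G)=1_G$, the right-hand side sends $1_G$ to $\rho(\tau)(1_G)=\tau^{-1}$. On the left, using the explicit formula (\ref{zeta}) with $x=1_G$, one computes $\bigl(\pi_n\circ\rho(\sigma)\circ\pi_n^{-1}\bigr)(1_G)=(\sigma^{-1})^n\sigma^{\,0}$-type expression; more directly, $\pi_n^{-1}(1_G)=1_G$, then $\rho(\sigma)(1_G)=\sigma^{-1}$, then $\pi_n(\sigma^{-1})=\sigma^{-n}$, giving $\tau^{-1}=\sigma^{-n}$, hence $\tau=\sigma^n$. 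Thus the translation part of any $\Hol(G)$-factorization is forced to be $\rho(\sigma^n)$, and therefore $\alpha=\rho(\sigma^n)^{-1}\circ\bigl(\pi_n\circ\rho(\sigma)\circ\pi_n^{-1}\bigr)=\zeta_{n,\sigma}$, which shows $\zeta_{n,\sigma}=\alpha\in\Aut(G)$ as desired.

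The step I expect to carry the real weight is the forward direction, and specifically the claim that membership in $\Hol(G)$ determines the translation factor uniquely. The cleanest justification is the uniqueness of the $\rho(G)\rtimes\Aut(G)$ decomposition: since $\rho(G)\cap\Aut(G)=1$ (an automorphism fixes $1_G$ whereas a nontrivial $\rho(\tau)$ moves it), the factorization $\rho(\tau)\circ\alpha$ of any element of $\Hol(G)$ is unique, so evaluating at $1_G$ legitimately isolates $\tau$. Once $\tau=\sigma^n$ is established, the identification of $\zeta_{n,\sigma}$ with the automorphism part is purely formal. No delicate estimates or inductions are needed here; the subtlety is entirely in correctly invoking the semidirect-product structure rather than in any computation.
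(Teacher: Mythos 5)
Your proof is correct and follows essentially the same route as the paper: the paper's entire proof is the observation that $\pi_n\circ\rho(\sigma)\circ\pi_n^{-1}$ sends $1_G$ to $\sigma^{-n}$, which is exactly your evaluation-at-$1_G$ argument pinning the translation part down to $\rho(\sigma^n)$. You simply spell out the semidirect-product uniqueness that the paper leaves implicit.
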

\begin{proof}This is because $\pi_n\circ\rho(\sigma)\circ\pi_n^{-1}$ sends $1_G$ to $\sigma^{-n}$. 
\end{proof}

\subsection{Some commutator calculations}

In this subsection, let $x,y\in G$ be two arbitrary elements. In the case that $n_c=2$, as shown in  \cite[(5.3.5)]{Rob book}, for example, for any $n\in\bN$, we have the well-known formula
\begin{equation}\label{c2 formula} x^ny^n = (xy)^n[x,y]^{{n \choose 2}},\mbox{ where }[x,y] = x^{-1}y^{-1}xy,\end{equation}
and using this, it may be verified that (\ref{zeta}) is a homomorphism. This was in fact the key idea to the proof of \cite[Proposition 3.1]{Caranti3}. In general, although (\ref{c2 formula}) might not hold, the difference $(xy)^{-n}x^ny^n$ is still a product of commutators. This difference was first studied by P. Hall  in \cite{Hall}. For our purpose, we shall use the following so-called Hall-Petresco formula for its description.

\begin{lem}\label{Hall lem}For any $n\in\bN$, we have the formula 
\[x^ny^n = (xy)^n c_2(x,y)^{{n \choose 2}}c_3(x,y)^{{n\choose 3}}\cdots c_n(x,y)^{{n\choose n}},\]
where $c_2(x,y) =[x,y][[x,y],y]$ and $c_i(x,y)\in \gamma_i(\langle x,y\rangle)$ for each $i=2,\dots,n$. 
\end{lem}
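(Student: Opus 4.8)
The plan is to prove this as a universal identity and then specialize. Since every relation involves only $x$, $y$, and iterated commutators, it suffices to establish the formula in the free group $F=\langle x,y\rangle$ on two generators and then apply the homomorphism $F\to\langle x,y\rangle\le G$; the resulting words $c_i(x,y)$ will then automatically lie in $\gamma_i(\langle x,y\rangle)$. I would first dispose of the low terms by direct computation: for $n=1$ the identity is trivial, and for $n=2$ it forces $c_2=(xy)^{-2}x^2y^2$. A short manipulation, $(xy)^{-2}x^2y^2 = y^{-1}x^{-1}y^{-1}xy^2 = y^{-1}[x,y]y = [x,y][[x,y],y]$, confirms both that $c_2$ has the asserted shape and that $c_2\in\gamma_2(\langle x,y\rangle)$.

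For general $n$ I would induct on the nilpotency class $c$ of $H=\langle x,y\rangle$, equivalently proving the identity modulo $\gamma_c(H)$ for increasing $c$ (recall $H$ is nilpotent, being a subgroup of a finite $p$-group). Passing to $H/\gamma_c(H)$, which has smaller class, the induction hypothesis supplies universal words $c_2,\dots,c_{c-1}$ with $c_i\in\gamma_i(H)$ such that, setting $P_n=(xy)^n c_2^{n\choose 2}\cdots c_{c-1}^{n\choose c-1}$, one has $x^ny^n=P_nE_n$ with an error $E_n\in\gamma_c(H)$. Because $\gamma_{c+1}(H)=1$, the subgroup $\gamma_c(H)$ is central and abelian, so I may work additively on it. Comparing $x^{n+1}y^{n+1}=x(x^ny^n)y$ with $P_{n+1}E_{n+1}$ and using that $E_n$ is central yields the recursion $E_{n+1}=E_n+D_n$, where $D_n=P_{n+1}^{-1}xP_ny\in\gamma_c(H)$, together with the base value $E_1=0$.

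It then remains to identify $E_n$. The key point is that the coordinates of $E_n$, with respect to a basis of the free abelian group $\gamma_c(H)/\gamma_{c+1}(H)$ given by basic commutators of weight $c$, are polynomial functions of $n$: this follows from the standard polynomiality of power maps in a finitely generated torsion-free nilpotent group, or equivalently from tracking the collection process. Granting this, the recursion $E_{n+1}-E_n=D_n$, the vanishing at small $n$ (which matches the already-established congruence modulo $\gamma_c$, forcing $E_n=0$ for $n<c$), and the resulting degree bound force each coordinate polynomial to be a constant multiple of ${n\choose c}$; hence $E_n=c_c^{n\choose c}$ for a single $n$-independent element $c_c\in\gamma_c(H)$, completing the induction.

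I expect the main obstacle to be precisely this last step: proving that the central error is governed by the single binomial coefficient ${n\choose c}$ with an $n$-independent $c_c$, rather than by some more complicated polynomial in $n$. The cleanest routes are to invoke the polynomiality of Mal'cev coordinates, or to run Hall's collection process explicitly and track how the Pascal identity ${n\choose i-1}+{n\choose i}={n+1\choose i}$ propagates the binomial exponents; both are classical. Since the statement is the familiar Hall--Petresco formula, one may alternatively cite \cite{Hall} and merely record the explicit value of $c_2$ computed above.
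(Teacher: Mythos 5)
Your closing alternative --- cite the literature and verify only the explicit form of $c_2$ --- is exactly what the paper does: its entire proof is the citation \cite[Theorem 3.5]{Suzuki book} together with the discussion following it, and your computation $(xy)^{-2}x^2y^2=y^{-1}[x,y]y=[x,y][[x,y],y]$ is correct under the paper's convention $[x,y]=x^{-1}y^{-1}xy$. If you take that route, the proposal matches the paper.

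The from-scratch argument, however, has two genuine gaps. First, the claim that the inductive congruence modulo $\gamma_c(H)$ ``forces $E_n=0$ for $n<c$'' is false: that congruence says only that $E_n\in\gamma_c(H)$, and whether $E_n$ actually dies in $\gamma_c/\gamma_{c+1}$ depends on which lifts of the words $c_2,\dots,c_{c-1}$ were fixed at the previous stage. Concretely, $c_2=[x,y]$ is a perfectly legitimate output of the class-two step (it agrees with $[x,y][[x,y],y]$ modulo $\gamma_3$), yet with this choice one gets $E_2=[[x,y],y]$, which is nontrivial modulo $\gamma_4$ in the free group. So the vanishing at small $n$ is part of what must be proved, not a consequence of the induction hypothesis; the repair is to prove instead that $E_n=\sum_{i=2}^{c}{n\choose i}d_i$ with central $d_i$ and then absorb each $d_i$ with $i<c$ into the word $c_i$ (possible since $d_i\in\gamma_c\subseteq\gamma_i$) --- as written, your interpolation step assumes its own conclusion. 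Second, the polynomiality step is carried out in the wrong group: you induct on the class of $H=\langle x,y\rangle\leq G$, a finite $p$-group, but there $\gamma_c(H)/\gamma_{c+1}(H)$ is a finite abelian $p$-group, not free abelian; basic commutators do not give a basis, torsion-free polynomiality results do not apply, and ``polynomial in $n$ of degree at most $c$'' carries no force when the coordinates live in $\bZ/p^k\bZ$. The argument must be run in the free nilpotent quotients $F/\gamma_{c+1}(F)$ of the free group, which conflicts with your own setup; moreover, the ``standard polynomiality of power maps'' you invoke is itself classically established via the collection process, i.e., via the very formula being proved, so citing it wholesale risks circularity. Both gaps are repairable --- the repairs are precisely the content of Hall's collection-process proof and of \cite[Theorem 3.5]{Suzuki book} --- but they are the substance of the theorem and are missing from the sketch.
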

\begin{proof}See \cite[Theorem 3.5]{Suzuki book} and the discussion after it.
\end{proof}

Our idea is to impose conditions on $n\in\bN$ and the nilpotency class $n_c$, so that we may reduce the formula in Lemma~\ref{Hall lem} to (\ref{c2 formula}), even when $n_c\geq 3$.

\begin{lem}\label{formula lem}Assume that $2\leq n_c\leq p-1$. Then, for any $n\in\bN$ such that
\[ n\equiv\ep\mmod{p^t},\mbox{ where }\ep\in\{0,1\},\]
the formula $(\ref{c2 formula})$ holds.
\end{lem}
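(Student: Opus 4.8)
The goal is to prove Lemma~\ref{formula lem}: under the hypothesis $2\leq n_c\leq p-1$, whenever $n\equiv\ep\pmod{p^t}$ with $\ep\in\{0,1\}$, the Hall--Petresco expansion in Lemma~\ref{Hall lem} collapses to the simpler formula $(\ref{c2 formula})$. The plan is to start from the general formula
\[
x^ny^n = (xy)^n c_2(x,y)^{\binom{n}{2}} c_3(x,y)^{\binom{n}{3}}\cdots c_{n_c}(x,y)^{\binom{n}{n_c}},
\]
where I have already truncated the product at $i=n_c$, since $c_i(x,y)\in\gamma_i(\langle x,y\rangle)\subseteq\gamma_i(G)=1$ for $i>n_c$. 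So the real content is to show that the two families of correction terms behave as desired: first, that $c_2(x,y)^{\binom{n}{2}}$ reduces to $[x,y]^{\binom{n}{2}}$ (i.e. the second factor $[[x,y],y]$ in $c_2$ contributes trivially), and second, that all the higher terms $c_i(x,y)^{\binom{n}{i}}$ for $3\leq i\leq n_c$ vanish.

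\textbf{The higher terms ($3\leq i\leq n_c$).} The key observation is an exponent count. Each $c_i(x,y)$ lies in $\gamma_i(G)$, which for $i\geq 3$ is contained in $\gamma_3(G)$, a group of exponent $p^t$. So it suffices to show $p^t \mid \binom{n}{i}$ for each $i$ in this range. I would use the hypothesis $n\equiv\ep\pmod{p^t}$ together with a $p$-adic valuation estimate for binomial coefficients. Concretely, for $\ep=0$ we have $p^t\mid n$, so $\binom{n}{i}=\frac{n}{i}\binom{n-1}{i-1}$ and the factor $n$ contributes valuation $\geq t$, while $i\leq n_c\leq p-1$ contributes valuation $0$; hence $v_p\!\left(\binom{n}{i}\right)\geq t$. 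For $\ep=1$, I would instead factor $\binom{n}{i}=\frac{n}{i}\binom{n-1}{i-1}$ and use that $n-1\equiv 0\pmod{p^t}$ inside $\binom{n-1}{i-1}=\frac{n-1}{i-1}\binom{n-2}{i-2}$, repeating as needed — more cleanly, the identity $\binom{n}{i}=\frac{n}{i}\binom{n-1}{i-1}$ lets the factor $(n-1)$ appearing via $\binom{n-1}{i-1}=\frac{n-1}{i-1}\binom{n-2}{i-2}$ supply the divisibility, and since $2\leq i\leq p-1$ none of the denominators $i,i-1$ are divisible by $p$. Either way the crucial point is that the range $i\leq p-1$ guarantees the denominators are $p$-units, so the $p^t$ dividing $n$ (or $n-1$) survives into $\binom{n}{i}$. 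This forces $c_i(x,y)^{\binom{n}{i}}=1$.

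\textbf{The term $c_2$.} Here $c_2(x,y)=[x,y]\,[[x,y],y]$, and the second factor $[[x,y],y]\in\gamma_3(G)$ again has order dividing $p^t$. The same divisibility $p^t\mid\binom{n}{2}$ (valid since $2\leq p-1$ makes the denominator a $p$-unit, and $n\equiv\ep\pmod{p^t}$ gives the needed factor from $n$ or $n-1$) kills this factor, leaving $[x,y]^{\binom{n}{2}}$. One should be slightly careful that $c_2$ is a product and the exponent $\binom{n}{2}$ distributes correctly: since $[[x,y],y]$ is central modulo higher terms (it lies in $\gamma_3$), the expression $([x,y][[x,y],y])^{\binom{n}{2}}$ equals $[x,y]^{\binom{n}{2}}[[x,y],y]^{\binom{n}{2}}$ up to commutators in $\gamma_4(G)\subseteq\gamma_3(G)$, all of which are annihilated by the same exponent. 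Thus the product collapses to exactly $[x,y]^{\binom{n}{2}}$, recovering $(\ref{c2 formula})$.

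\textbf{Main obstacle.} The arithmetic of the binomial valuations is routine, so the genuinely delicate point is the interaction between the ordering of the $c_i$ factors and the claim that each can be removed independently. Because the $c_i(x,y)$ do not commute in general, I cannot simply treat the right-hand side as a literal product of independent powers; I expect the cleanest route is to argue inductively by nilpotency class, peeling off the top factor $c_{n_c}(x,y)^{\binom{n}{n_c}}$ (which is central, lying in $\gamma_{n_c}(G)\subseteq Z(G)$, and already trivial by the valuation count) and then working down, so that at each stage the surviving correction term lives in the center of the relevant quotient and the exponent divisibility applies cleanly. Verifying that this telescoping is legitimate — that removing the higher, more central terms does not disturb the divisibility analysis of the lower ones — is where the hypothesis $n_c\leq p-1$ does its essential work and is the step I would write out most carefully.
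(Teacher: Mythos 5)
Your proposal is correct and follows essentially the same route as the paper's proof: truncate the Hall--Petresco expansion at $n_c$, kill every factor $c_i(x,y)^{n\choose i}$ with $3\leq i\leq n_c$ because $c_i(x,y)\in\gamma_3(G)$ has order dividing $p^t$ while $p^t$ divides ${n\choose i}$ (the point being that $i\leq n_c<p$ keeps $i!$ prime to $p$, so the factor $n-\epsilon$ in the numerator survives), and then reduce $c_2(x,y)=[x,y]\,[[x,y],y]$ by applying the same expansion once more to the pair $([x,y],[[x,y],y])$ with exponent $N={n\choose 2}\equiv0\pmod{p^t}$. The only substantive difference is that the ``main obstacle'' you anticipate is vacuous: each factor $c_i(x,y)^{n\choose i}$ with $i\geq3$ is literally equal to $1_G$, so it drops out of the product with no ordering or inductive peeling argument whatsoever (this is exactly what the paper does), and the one small point you gloss over instead is that writing the product up to $c_{n_c}$ presupposes $n\geq n_c$, which holds because $t\geq1$ forces either $n=1$ (trivial) or $n\geq p^t\geq p>n_c$.
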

\begin{proof}We may assume that $n_c\geq3$, which means that $t\geq1$, whence $p\leq n$. Hence, we have $n_c<p\leq n$, and since $\gamma_i(G)=1$ for $i\geq n_c+1$, the formula in Lemma~\ref{Hall lem} may be rewritten as
\[x^ny^n = (xy)^n c_2(x,y)^{{n \choose 2}}c_3(x,y)^{{n\choose 3}}\cdots c_{n_c}(x,y)^{{n\choose n_c}}.\]
For each $2\leq i\leq n_c$, observe that $p^t$ divides $n-\ep$ and hence divides
\[ {n\choose i} = \frac{n(n-1)\cdots (n-i+1)}{i!},\]
because $i\leq n_c<p$ implies $i!$ has no factor of $p$. Since $\gamma_3(G)$ has exponent $p^t$ by definition, we then deduce that 
\[ c_i(x,y)^{{n\choose i}} = 1_G\mbox{ for all }3\leq i\leq n_c,\mbox{ whence }x^ny^n = (xy)^n c_2(x,y)^{{n\choose 2}}.\]
Put $N={n\choose 2}$, which is divisible by $p^t$. We may then apply the above formula again to the elements $[x,y]$ and $[[x,y],y]$ in $c_2(x,y)$ to obtain
\begin{align*}x^ny^n & = (xy)^nc_2(x,y)^N\\
& = (xy)^n[x,y]^N[[x,y],y]^Nc_2([x,y],[[x,y],y])^{-{N\choose 2}}.\end{align*}
Again, since $\gamma_3(G)$ has exponent $p^t$, we have
\[ [[x,y],y]^N, c_2([x,y],[[x,y],y])^{{N\choose 2}} = 1_G,\mbox{ whence }x^ny^n = (xy)^n [x,y]^{{n\choose 2}}.\]
This proves the claim.
\end{proof}

The next lemma is well-known in the special case $n_c = 2$. Recall that $G$ is said to be \emph{regular} (as a $p$-group) if for every $\sigma,\tau\in G$ and $i\in\mathbb{N}$, we have
\[ \sigma^{p^i}\tau^{p^i} = (\sigma\tau)^{p^i}\mu^{p^i}\mbox{ for some }\mu\in \gamma_2(\langle\sigma,\tau\rangle).\]
It is known, by Lemma~\ref{Hall lem}, that $G$ is regular when $n_c\leq p-1$. We thank the referee for pointing out that Lemma~\ref{extra lem} below is true as long as $G$ is regular and for giving the following simple proof.

\begin{lem}\label{extra lem}Assume that $G$ is regular (as a $p$-group). Then, for any $n\in\bN$ which is a multiple of $p^t$, we have
\begin{enumerate}[(a)]
\item $[x,y]^n \in Z(G)$;
\item $[\sigma,xy]^{n} = [\sigma,x]^{n}[\sigma,y]^{n}$ for all $\sigma\in G$;
\item $[x,y]^{n} = [x,y^n]$.
\end{enumerate}
\end{lem}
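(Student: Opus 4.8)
The plan is to prove all three identities by reducing, modulo the correction terms that commutator collection produces, to the class-two case, and then to kill those corrections using the divisibility facts available to us: $\exp(\gamma_3(G)) = p^t$ together with $p^t \mid n$, and the fact that $p$ is odd so that $p^t \mid \binom{n}{2} = \tfrac{n(n-1)}{2}$. The unifying observation is that each of (a), (b), (c) already holds in the class-two quotient $G/\gamma_3(G)$: in a group of class $\le 2$ every commutator is central, so $[x,y]^n$ is automatically central, $[x, y^n] = [x,y]^n$, and $[\sigma, xy] = [\sigma,x][\sigma,y]$ with both factors central, whence (b) is immediate. Consequently, in $G$ itself the difference between the two sides of each identity lies in $\gamma_3(G)$, and the whole problem is to show that this $\gamma_3(G)$-valued error term vanishes.

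For the individual collections I would write, inside $\langle x, y\rangle$: for (a), $[[x,y]^n, g] = \prod_{i=0}^{n-1}[[x,y],g]^{[x,y]^i}$ with $[[x,y],g] \in \gamma_3(G)$; for (c), $[x, y^n] = \prod_{i=0}^{n-1}[x,y]^{y^i}$, each factor being $[x,y]$ times an element of $\gamma_3(G)$; and for (b), expand $[\sigma, xy] = [\sigma,y]\,[\sigma,x]\,[[\sigma,x],y]$ and raise to the $n$-th power. In each case the leading term collapses to $[x,y]^n$ (respectively $[\sigma,x]^n[\sigma,y]^n$), and the error is a product of $n$-th powers and $\binom{n}{2}$-th powers of elements of $\gamma_3(G)$ together with higher-weight commutators. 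Since $\exp(\gamma_3(G)) = p^t \mid n$ and, as $p$ is odd, $p^t \mid \binom{n}{2}$, each such power is trivial, provided every correction term can genuinely be exhibited as a $p^t$-divisible power of a single element of $\gamma_3(G)$.

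That last proviso is exactly where regularity does the essential work, and it is the main obstacle. Because the present lemma only assumes $G$ regular (so $n_c$ may exceed $p$), the Hall--Petresco expansion of Lemma~\ref{Hall lem} is not by itself enough: the binomial coefficients $\binom{n}{i}$ for $3 \le i \le n_c$ need no longer be divisible by $p^t$, since $i!$ may now contain a factor of $p$. What rescues the argument is the structure theory of regular $p$-groups, in which the agemo subgroups $\mho_k(H) = \{h^{p^k} : h \in H\}$ are subgroups and one has the power--commutator interchange whereby $[x^{p^k}, y]$, $[x, y^{p^k}]$ and $[x,y]^{p^k}$ all coincide modulo $\mho_k(\gamma_3(\langle x,y\rangle))$. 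Taking $k = t$ and using $\mho_t(\gamma_3(G)) = 1$ (as $\exp(\gamma_3(G)) = p^t$) collapses the corrections to the identity, and general multiples $n = p^t m$ are handled by first establishing (a), which makes $[x,y]^{p^t}$ central and renders the remaining reduction steps exact. I would therefore organise the write-up as an induction on the nilpotency class: the base case $n_c = 2$ is the classical formula (\ref{c2 formula}), and for the inductive step I pass to $G/\gamma_{n_c}(G)$ --- still regular, with $\exp(\gamma_3) \mid p^t$ --- to obtain each identity modulo the central subgroup $\gamma_{n_c}(G)$, and then verify that the resulting central error, a $\mho_t$-power of a $\gamma_3(G)$-element, is trivial.
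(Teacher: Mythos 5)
Your proposal is correct and takes essentially the same route as the paper: the power--commutator interchange you invoke is exactly what the paper extracts from the definition of regularity (and, for part (a), cites from M.~Hall's Theorem~12.4.3(1)), with every correction term being a $p^t$-th power of an element of $\gamma_3(G)$ and hence trivial, and both arguments establish (a) first so that $[x,y]^{p^t}$ is central, then pass from $n=p^t$ to $n=mp^t$ by raising to the $m$-th power. Your two extra organizational devices --- the naive collection expansions (which you rightly discard, since $p^t\nmid\binom{n}{i}$ in general for $i\geq p$) and the closing induction on nilpotency class --- are superfluous but harmless, as the direct regularity computation already kills the $\gamma_3(G)$-valued errors.
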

\begin{proof}Let $n=mp^t$ with $m\in\bN$. For any $z\in G$, by \cite[Theorem 12.4.3 (1)]{MHall book} and regularity, we have the equivalence
\[ [[x,y]^{p^t},z] =1_G \mbox{ if and only if }[[x,y],z]^{p^t}=1_G.\]
The latter holds because $\gamma_3(G)$ has exponent $p^t$. This yields $[x,y]^{p^t}\in Z(G)$, which clearly implies part (a). For any $\sigma\in G$, we have the identity
\[ [\sigma,xy] = [\sigma,y][\sigma,x][[\sigma,x],y].\]
By regularity and the fact that $\gamma_3(G)$ has exponent $p^t$, we obtain
\begin{equation}\label{eqn}[\sigma,xy]^{p^t}= ([\sigma,y][\sigma,x])^{p^t} [[\sigma,x],y]^{p^t} \mu_1^{p^t} = [\sigma,y]^{p^t}[\sigma,x]^{p^t}\mu_2^{p^t}\mu_1^{p^t},\end{equation}
where $\mu_1$ and $\mu_2$, respectively, belong to the subgroups
\[ \gamma_2(\langle [\sigma,y][\sigma,x],[[\sigma,x],y]\rangle)\mbox{ and } \gamma_2(\langle[\sigma,y],[\sigma,x]\rangle).\]
Both of them lie in $\gamma_3(G)$ and thus $\mu_1^{p^t},\mu_2^{p^t}=1_G$. Since $[\sigma,x]^{p^t},[\sigma,y]^{p^t}\in Z(G)$ by part (a), raising (\ref{eqn}) to the $m$th power gives part (b). Now, by regularity, we also have
\begin{equation}\label{eqn'}[x,y]^{p^t} = (x^{-1}y^{-1}x\cdot y)^{p^t} = (x^{-1}y^{-p^{t}}x)\cdot y^{p^t}\cdot \mu^{p^t} = [x,y^{p^t}]\mu^{p^t},\end{equation}
where $\mu$ belongs to the subgroup
\[ \gamma_2(\langle x^{-1}y^{-1}x,y\rangle) = \gamma_2(\langle[x,y]y^{-1},y\rangle) = \gamma_2(\langle[x,y],y\rangle).\]
This lies in $\gamma_3(G)$ and so $\mu^{p^t}=1_G$. Replacing $y$ by $y^m$ in (\ref{eqn'}) then yields
\[ [x,y^m]^{p^t} = [x,y^n].\]
But by part (b), we know that
\[ [x,y^m]^{p^t} = [x,y]^{p^t}\cdots [x,y]^{p^t} = [x,y]^n\]
and this proves part (c).
\end{proof}

\subsection{Proof of Theorem~\ref{main thm'}}

Assume that $2\leq n_c\leq p-1$. In the following, let $n\in\bN$ be such that $p\nmid n$ and $n\equiv1$ (mod $p^t$). Let $\widetilde{n}\in\bN$ be as after (\ref{zeta}), and we have $\widetilde{n}\equiv1$ (mod $p^t)$. Also, note that $p^t$ divides both ${n\choose 2}$ and ${\widetilde{n}\choose 2}$. 

\vspace{1.5mm}

First, let $\sigma\in G$ and consider the map $\zeta_{n,\sigma}$ defined in (\ref{zeta}). By Lemmas~\ref{formula lem} and~\ref{extra lem}(a), for any $x\in G$, we may rewrite
\[ \zeta_{n,\sigma}(x) = (x^{\widetilde{n}}\sigma^{-1})^n\sigma^n\\
 = x\sigma^{-n}[x^{\widetilde{n}},\sigma^{-1}]^{-{n\choose 2}}\sigma^{n} = x[\sigma^{-1},x^{\widetilde{n}}]^{{n\choose 2}}.\]
Applying Lemmas~\ref{formula lem} and~\ref{extra lem}(a),(b) again, for any $x,y\in G$, we then have
\begin{align*}
\zeta_{n,\sigma}(xy) 
&= xy[\sigma^{-1},x^{\widetilde{n}}y^{\widetilde{n}}[x,y]^{-{\widetilde{n}\choose 2}}]^{n\choose 2}\\
&= xy[\sigma^{-1},x^{\widetilde{n}}y^{\widetilde{n}}]^{n\choose 2}\\
&= xy[\sigma^{-1},x^{\widetilde{n}}]^{n\choose 2}[\sigma^{-1},y^{\widetilde{n}}]^{n\choose 2}\\
& = x[\sigma^{-1},x^{\widetilde{n}}]^{n\choose 2}y[\sigma^{-1},y^{\widetilde{n}}]^{n\choose 2}\\
& =\zeta_{n,\sigma}(x)\zeta_{n,\sigma}(y)
\end{align*}
and this proves that $\zeta_{n,\sigma}$ is a homomorphism. By Lemma~\ref{zeta lem} and (\ref{pi n}), this in turn implies that $\pi_n\in\NHol(G)$, so that $\pi_n\Hol(G)\in T(G)$. 

\vspace{1.5mm}

Next, observe that since $\pi_n$ sends $1_G$ to $1_G$, we have
\[\pi_n\in\Hol(G)\iff \pi_n\in\Aut(G).\]
From Lemmas~\ref{formula lem} and~\ref{extra lem}(c), we then deduce that
\begin{align*}
\pi_n\in\Hol(G) &\iff [x,y]^{{n\choose 2}} =1_G\mbox{ for all }x,y\in G\\
&\iff [x,y^{n\choose 2}]=1_G\mbox{ for all }x,y\in G\\
&\iff y^{n\choose 2}\in Z(G)\mbox{ for all }y\in G\\
&\iff \exp(G/Z(G))\mbox{ divides }{\textstyle\small{n\choose2}}\\
&\iff n\equiv1\mmod{p^r},
\end{align*}
where the last equivalence holds because $p\nmid n$, which means that the highest power of $p$ dividing $n\choose 2$ equals that of $n-1$. This shows that $\kappa$ is well-defined and injective. Clearly $\kappa$ is a homomorphism and the theorem now follows.

\subsection{Further questions} Theorem~\ref{main thm'} raises two natural questions:
\begin{enumerate}[1.]
\item How often Theorem~\ref{main thm'} may be applied to show that $T(G)$ is not elementary $2$-abelian or not even a $2$-group?
\item Can $T(G)$ be a non-$2$-group or can we generalize Theorem~\ref{main thm'} to the case when $G$ has nilpotency class $n_c\geq p$?
\end{enumerate}
Let us briefly discuss these two questions.

\subsubsection{Applicability}

In the case that $n_c=2$, namely $r\geq1$ and $t=0$, from Theorem~\ref{main thm'} we see that $T(G)$ has a cyclic subgroup of order $(p-1)p^{r-1}$, so in particular
\begin{itemize}
\item $T(G)$ is not elementary $2$-abelian when $p\geq 5$;
\item $T(G)$ is not a $2$-group when $p-1$ is not a power of two or $r\geq2$.
\end{itemize}
This was first shown in \cite[Proposition 3.1]{Caranti3} and some specific examples were given in \cite[Section 5]{Caranti3}. 

\vspace{1.5mm}

In the case that $3\leq n_c\leq p-1$, we have $t\geq 1$, and from Theorem~\ref{main thm'} we see that $T(G)$ has a cyclic subgroup of order $\max\{1,p^{r-t}\}$, whence
\begin{itemize}
\item $T(G)$ is not a $2$-group when $r\geq t+1$.
\end{itemize}
However, the condition $r\geq t+1$ seems quite restrictive, and in particular it imposes that $r\geq2$ for $t\geq1$. Notice that $G$ has order at least $p^{n_c+1}$, and $G$ is said to be of \emph{maximal class} if its order is precisely $p^{n_c+1}$.

\begin{lem}\label{max class lem}Assume that $G$ is of maximal class and that $n_c\leq p$. Then, the quotient $G/Z(G)$ has exponent $p$, namely $r=1$.
\end{lem}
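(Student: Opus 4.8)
The plan is to prove that $G/Z(G)$ has exponent $p$ when $G$ is of maximal class with $n_c \leq p$, by showing that the $p$-th power of any element lands in the center. First I would set up the structure of a maximal class $p$-group: since $|G| = p^{n_c+1}$ and the lower central series has $n_c$ nontrivial terms beyond the trivial group, each successive quotient $\gamma_i(G)/\gamma_{i+1}(G)$ must have order $p$ for $2 \leq i \leq n_c$, while $G/\gamma_2(G)$ has order $p^2$. In particular $\gamma_{n_c}(G) = Z(G)$ has order $p$, and the center is exactly the bottom of the lower central series. The goal is to show $x^p \in Z(G) = \gamma_{n_c}(G)$ for every $x \in G$.

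The key tool is the regularity of $G$, which holds since $n_c \leq p-1$ would give regularity directly by Lemma~\ref{Hall lem}; one must be slightly careful at the boundary $n_c = p$, but the Hall--Petresco expansion still lets one control $p$-th powers. The main idea I would use is that in a regular $p$-group, $x^p \in \gamma_2(G)$ whenever $x \in \gamma_2(G)$ fails, so more precisely the map $x \mapsto x^p$ shifts elements down the lower central series. Concretely, using the formula from Lemma~\ref{Hall lem} applied to the $p$-th power, for $x \in \gamma_i(G)$ one gets $x^p \equiv (\text{product of higher commutators}) \pmod{\gamma_{i+p-1}(G)}$ or similar, and the binomial coefficients $\binom{p}{j}$ for $2 \leq j \leq p-1$ are all divisible by $p$. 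The cleanest approach is to argue by induction on the position in the lower central series: show that raising to the $p$-th power sends $\gamma_i(G)$ into $\gamma_{i+1}(G)$ for all relevant $i$, using that each graded piece has order $p$ so that $p$-th powers of generators of $\gamma_i(G)/\gamma_{i+1}(G)$ must vanish in that quotient.

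The step I expect to be the main obstacle is controlling the $p$-th power of a \emph{generic} element $x \in G$ (not just one in $\gamma_2(G)$), since $G/\gamma_2(G)$ has rank two and its elements need not have order $p$ a priori. The resolution is to combine the above with the maximal class hypothesis: since $r = \log_p \exp(G/Z(G))$, I would show $x^p \in \gamma_2(G)$ first (using that $x^p \bmod \gamma_2(G)$ lies in the abelian quotient $G/\gamma_2(G)$ of exponent dividing the exponent of $G/\gamma_2(G)$, which for maximal class forces a bound), and then feed $x^p$ into the descending argument to conclude $x^{p} \cdot (\text{correction}) \in \gamma_{n_c}(G) = Z(G)$. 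Once every $p$-th power is central, $\exp(G/Z(G)) \mid p$, and since $G$ is nonabelian (as $n_c \geq 2$) we cannot have exponent $1$, giving $\exp(G/Z(G)) = p$, i.e.\ $r = 1$. The delicate point throughout is keeping the commutator corrections inside $Z(G)$, which is precisely where regularity and the exponent bound $\exp(\gamma_3(G)) = p^t$ interact with the maximal class filtration.
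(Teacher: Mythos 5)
Your proposal has a genuine gap at its central step, and it is fatal rather than cosmetic. Your only engine is ``a $p$-th power descends one step in the lower central series'': $G/\gamma_2(G)\cong C_p\times C_p$ gives $x^p\in\gamma_2(G)$, and $|\gamma_i(G)/\gamma_{i+1}(G)|=p$ for $i\ge2$ gives $u^p\in\gamma_{i+1}(G)$ for $u\in\gamma_i(G)$. But iterating this only controls \emph{iterated} powers: it shows $x^{p^k}\in\gamma_{k+1}(G)$, and ``feeding $x^p$ into the descending argument'' produces $x^{p^2}\in\gamma_3(G)$, not $x^p\in\gamma_3(G)$. The lemma requires the \emph{single} $p$-th power of an arbitrary element to drop all the way down to $\gamma_{n_c}(G)=Z(G)$, and no step of your outline produces that. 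To see that the missing ingredient is essential, note that every structural fact you actually verify (each graded piece of order $p$, $G/\gamma_2(G)$ elementary abelian of order $p^2$, $Z(G)=\gamma_{n_c}(G)$ of order $p$, hence $x^p\in\gamma_2(G)$ for all $x$) holds for \emph{every} $p$-group of maximal class, in particular for the dihedral group of order $16$ (maximal class, $p=2$, $n_c=3$), where the conclusion fails: there $G/Z(G)$ is dihedral of order $8$ and has exponent $4$. So the hypothesis $n_c\le p$ must enter through genuine structure theory, not through lower-central-series bookkeeping.

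The one place you do invoke $n_c\le p$ is regularity, and that is exactly where the outline breaks down. Regularity follows from Lemma~\ref{Hall lem} only for $n_c\le p-1$; at the boundary $n_c=p$, which the lemma allows, $G$ can be irregular --- $C_p\wr C_p$ has order $p^{p+1}$ and maximal class $p$, and is the standard example of an irregular $p$-group --- so ``the Hall--Petresco expansion still lets one control $p$-th powers'' is precisely the unproved point, asserted where it is false-looking hardest. Moreover, even when $G$ is regular, the theorem of Hall used elsewhere in the paper (\cite[Theorem 12.4.3]{MHall book}) makes the statement ``$x^p\in Z(G)$ for all $x$'' \emph{equivalent} to $\exp(\gamma_2(G))=p$, so under regularity your goal has merely been restated, not proved. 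Be aware that the paper itself does not prove this lemma: it cites \cite[Theorem 9.5]{B book}, whose proof rests on Blackburn's structure theory of $p$-groups of maximal class (two-step centralizers, positive degree of commutativity for order at most $p^{p+1}$, and the fundamental subgroup $C_G(\gamma_2(G)/\gamma_4(G))$). Some input of this kind --- or, alternatively, a capability-type argument ruling out groups such as the extraspecial group of order $p^3$ and exponent $p^2$ as central quotients --- is indispensable, and none of it can be recovered from the counting and regularity arguments in your outline.
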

\begin{proof}See \cite[Theorem 9.5]{B book}.\end{proof}

All groups of order $p,p^2,p^3$ have nilpotency class at most $2$. For groups $G$ of order $p^4$, none of them can satisfy $n_c=3$ and $r\geq t+1$ by Lemma~\ref{max class lem}. As for groups $G$ of order $p^5,p^6$, it is known that they may be separated into $10$ and $43$  so-called \emph{isoclinism} families, respectively. In the two tables below, let us summarize the information of interest to us, which is found in \cite[Section 4]{James}. The list of groups of order $p^6$ given in \cite{James} is incomplete and the number of groups in each family stated below is taken from \cite[Table 2]{p6}.
\begin{longtable}{|c|c|c|c|c|}
\hline
Family&Number of groups& $n_c$& $(r,t)$ & $r\geq t+1?$
\\\hline
$3$ & $13$ & $3$ & $(1,1)$ & No\\
$6$ & $p+7$ & $3$ & $(1,1)$& No\\
$7$ & $5$ & $3$ & $(1,1)$& No\\
$8$ & $1$ & $3$ & $(2,1)$& \textbf{Yes}\\
$9$ & $\gcd(p-1,3)+2$ & $4$ & $(1,1)$& No\\
$10$ & $\gcd(p-1,3) +\gcd(p-1,4)+1$ & $4$ & $(1,1)$& No\\
\hline
\end{longtable}
\vspace{-5mm}
\begin{center}{\small The isoclinism families of groups of order $p^5$ and nilpotency class $n_c\geq3$ for $p\geq5$.}\end{center}
\begin{longtable}{|c|c|c|c|c|}
\hline
Family&Number of groups& $n_c$& $(r,t)$& $r\geq t+1?$
\\\hline
$3$ & $32$ & $3$ & $(1,1)$ & No\\
$6$ & $2p+21$ & $3$ & $(1,1)$ & No\\
$7$ & $21$ & $3$ & $(1,1)$ & No\\
$8$ & $p+5$ & $3$ & $(2,1)$ & \textbf{Yes}\\
$9$ & $3\gcd(p-1,3)+7$ & $4$ & $(1,1)$ & No\\
$10$ & $3\gcd(p-1,3) +3\gcd(p-1,4)+4$ & $4$ & $(1,1)$ & No\\
$16$ & $p+\gcd(p-1,3)+12$ & $3$ & $(1,1)$ & No\\
$17$ & $4p+\gcd(p-1,3)+30$ & $3$ & $(1,1)$ & No\\
$18$ & $3p+\gcd(p-1,3)+\gcd(p-1,4)+9$ & $3$ & $(1,1)$ & No\\
$19$ & $(3p^2+10p+21)/2$ & $3$ & $(1,1)$ & No\\
$20$ & $5p+\gcd(p-1,3)+\gcd(p-1,4)+13$ & $3$ & $(1,1)$ & No\\
$21$ & $(3p^2+4p+5)/2$ & $3$ & $(1,1)$ & No\\
$22$ & $7$ & $3$ & $(1,1)$ & No\\
$23$ & $p+4\gcd(p-1)+\gcd(p-1,4)+5$ & $3$ & $(1,1)$ & No\\
$24$ & $\gcd(p-1,3)+3$ & $3$ & $(1,1)$ & No\\
$25$ & $(p+3)/2$ & $3$ & $(2,1)$ & \textbf{Yes}\\
$26$ & $(p+3)/2$ & $3$ & $(2,1)$ & \textbf{Yes}\\
$27$ & $\gcd(p-1,3)+\gcd(p-1,4)+3$ & $3$ & $(1,1)$ & No\\
$28$ & $p$ & $3$ & $(2,1)$ & \textbf{Yes}\\
$29$ & $p$ & $3$ & $(2,1)$ & \textbf{Yes}\\
$30$ & $2\gcd(p-1,3)+4$ & $3$ & $(1,1)$ & No\\
$31$ & $7$ & $3$ & $(1,1)$ & No\\
$32$ & $5$ & $3$ & $(1,1)$ & No\\
$33$ & $6$ & $3$ & $(1,1)$ & No\\
$34$ & $3$ & $3$ & $(2,1)$ & \textbf{Yes}\\
$35$ & $\gcd(p-1,4)+2$ & $3$ & $(1,1)$ & No\\
$36$ & $\gcd(p-1,4)+\gcd(p-1,6)+1$ & $3$ & $(1,1)$ & No\\
$37$ & $\gcd(p-1,4)+4$ & $3$ & $(1,1)$ & No\\
$38$ & $p+\gcd(p-1,4)+\gcd(p-1,5)$ & $3$ & $(1,1)$ & No\\
$39$ & $p+\gcd(p-1,5)+\gcd(p-1,6)$ & $3$ & $(1,1)$ & No\\
$40$ & $\gcd(p-1,3)+2$ & $3$ & $(1,1)$ & No\\
$41$ & $\gcd(p-1,3)+1$ & $3$ & $(1,1)$ & No\\
$42$ & $p+1$ & $3$ & $(2,1)$ & \textbf{Yes}\\
$43$ & $p$ & $3$ & $(2,1)$ & \textbf{Yes}\\
\hline
\end{longtable}
\vspace{-5mm}
\begin{center}{\small The isoclinism families of groups of order $p^6$ and nilpotency class $n_c\geq3$ for $p\geq5$.}\end{center}

Hence, for $p\geq 5$, there are exactly one and $6p+12$, respectively, groups $G$ of order $p^5$ and $p^6$ with $n_c\geq 3$ and $r\geq t+1$. For both $i=5,6$, the ratio
\[ \frac{\#\{\mbox{groups $G$ of order $p^i$ with $n_c\geq 3$ and $r\geq t+1$}\}}{\#\{\mbox{all groups $G$ of order $p^i$ with $n_c\geq 3$}\}}\]
is quite small and tends to zero as $p$ increases. But for $i=6$, the numerator tends to infinity as $p$ increases, and so Theorem~\ref{main thm'} gives infinitely many new examples $G$ (with $3\leq n_c\leq p-1$) for which $T(G)$ is not a $2$-group.

\subsubsection{Large nilpotency class}\label{new sec}

The key ingredient to the proof of Theorem~\ref{main thm'} is Lemmas~\ref{formula lem} and~\ref{extra lem}. In view of Lemma~\ref{extra lem}, it seems natural to ask whether Theorem~\ref{main thm'} may be extended to the case when $G$ is only regular but $n_c\geq p$. We thank the referee for suggesting this. However, if $G$ is regular but $n_c\geq p$, then $\pi_n$ is reasonably well-behaved only when $n$ is a power of $p$, in which case $\pi_n$ is not bijective. In fact, by using the {\sc SmallGroup} library \cite{SGlibrary} in {\sc Magma} \cite{magma}, we found a counterexample.

\begin{example}\label{regular eg}Let $G=\mbox{{\sc SmallGroup}}(729,22)$. The following information may be easily calculated in {\sc Magma}.
\begin{itemize}
\item $G$ has exponent $27$.
\item $G$ has nilpotency class $3$, namely $n_c = 3$.
\item $G/Z(G)$ has exponent $9$, namely $r=2$.
\item $\gamma_3(G)$ has exponent $3$, namely $t=1$.
\item $\gamma_2(G)$ has order $9$.
\item $\gamma_2(G)$ is a cyclic group.
\end{itemize}
The last fact implies that $G$ is regular as a $3$-group, by \cite[(3.13)]{Suzuki book}. However, running the code below in {\sc Magma} yields the output $[1,8,10,17,19,26]$.
\begin{lstlisting}[
  mathescape,
  columns=fullflexible,
  basicstyle=\ttfamily,
]
  G:=SmallGroup(729,22);
  X:=[x:x in G];
  s:=X[82]; // a choice of $\sigma\in G$
  x:=X[244]; // a choice of $x\in G$
  y:=x^2;
  L:=[];
  for n in [n:n in [1..27]|GreatestCommonDivisor(n,3) eq 1] do
    nt:=InverseMod(n,27); // the $\widetilde{n}\in\bN$ in (3.2)
    Zx:=(x^(nt)*s^(-1))^n*s^n; // the value of $\zeta_{n,\sigma}(x)$ as in (3.2)
    Zy:=(y^(nt)*s^(-1))^n*s^n; // the value of $\zeta_{n,\sigma}(x^2)$ as in (3.2)
    if Zy eq (Zx)^2 then // test if $\zeta_{n,\sigma}(x^2) = \zeta_{n,\sigma}(x)^2$ holds
      Append(~L,n);
    end if;
  end for;
  L; // list of $n$ mod $27$ for which $\zeta_{n,\sigma}$ might be in $\Aut(G)$
\end{lstlisting}
This implies that there exists $\sigma\in G$ such that $\zeta_{n,\sigma}\not\in\Aut(G)$ for all $n\in\bZ$ coprime to $3$ with $n\not\equiv \pm1$ (mod $9$). By (\ref{pi n}) and Lemma~\ref{zeta lem}, it follows that $\pi_n\notin\NHol(G)$ for these $n$, whence $\kappa$ is not well-defined.

\vspace{1.5mm}

Let $m\in\bZ$. For any $x,y\in G$, by regularity we have
\[ \pi_{9m+1}(xy) = (xy)^{9m}(xy) = (x^9y^9\mu^9)^m(xy),\mbox{ where }\mu\in\gamma_2(\langle x,y\rangle),\]
and $\mu^9=1_G$ because $\gamma_2(G)$ has order $9$. Also, notice that $x^9,y^9\in Z(G)$ since $G/Z(G)$ has exponent $9$. It follows that
\[\pi_{9m+1}(xy) = (x^{9}y^{9})^mxy =x^{9m}y^{9m}xy = \pi_{9m+1}(x)\pi_{9m+1}(y). \]
This shows that 
\[\pi_{9m+1}\in\Aut(G)\mbox{ and }\pi_{9m-1}\equiv\pi_{-1}\mmod{\Aut(G)}.\]
Therefore, the power maps $\pi_n$ with $n\in\bZ$ coprime to $3$ only give rise to the subgroup $\langle\pi_{-1}\Hol(G)\rangle$ of order two in $T(G)$.

\vspace{1.5mm}

Nonetheless, by computing the size of (\ref{HH0}), which is the order of $T(G)$, we found in {\sc Magma} that $T(G)$ has order $18$ and hence is not a $2$-group.
\end{example}

Example~\ref{regular eg} shows that Theorem~\ref{main thm'} may not be generalized to all regular $p$-groups. It also shows that there is a (regular) $p$-group $G$ of nilpotency class at least $p$ for which $T(G)$ is not a $2$-group, and the elements in $T(G)$ of odd order do not arise from the power maps. It leads to the following questions.

\begin{question}Give more examples of $p$-groups $G$ of nilpotency class $n_c\geq p$ for which $T(G)$ is not a $2$-group.
\end{question}

\begin{question}Other than the power maps $\pi_n$ for $n$ coprime to $p$, what are other ways to construct elements in $T(G)$ of odd order?
\end{question}

A. Caranti \cite{Caranti3} used bilinear maps but his method is specific for $p$-groups of nilpotency class two. It would be interesting to see if there are other methods which work more generally, and in particular, figure out where the elements in $T(G)$ of odd order come from for $G$ with {\sc SmallGroup} ID equal to $(729,22)$, as well as those in (\ref{(n,i)}).

\section{Acknowledgments}

The author thanks the referee for helpful suggestions which led to the discussion in Subsection~\ref{new sec}.

\end{document}